\newcommand{\mycircle}[1]
{\draw[fill=white, line width=1pt] #1 circle[radius=1.2mm]}
\tikzset{every picture/.style={line width=0.7pt}}
\newenvironment{enumerater}{\begin{enumerate}[\upshape (1)]}%
{\end{enumerate}}
\newcommand{\oo}[1]{\left]{#1}\right[}
\newcommand{\oc}[1]{\left]{#1}\right]}
\newcommand{\co}[1]{\left[{#1}\right[}
\newcommand{\Pow}{\mathfrak{P}}
\newcommand{\pup}[1]{\textup{(}{#1}\textup{)}}
\newcommand{\lgrp}{$\ell$-group}
\newcommand{\lsgrp}{$\ell$-sub\-group}
\newcommand{\lhom}{$\ell$-ho\-mo\-mor\-phism}
\newcommand{\lemb}{$\ell$-em\-bed\-ding}
\newcommand{\jirr}{join-ir\-re\-duc\-i\-ble}
\newcommand{\jh}{join-ho\-mo\-mor\-phism}
\newcommand{\mh}{meet-ho\-mo\-mor\-phism}
\newcommand{\eqdef}{\overset{\mathrm{def}}{=}}
\newcommand{\rF}{\mathrm{F}}
\newcommand{\xF}{\mathbf{F}}
\DeclareMathOperator{\At}{At}
\newcommand{\OK}{\ol{\Bbbk}^{\,+}}
\newcommand{\OQ}{\ol{\QQ}^{\,+}}
\newcommand{\ga}{\alpha}
\newcommand{\gc}{\gamma}
\newcommand{\gd}{\delta}
\newcommand{\gf}{\varphi}
\newcommand{\gl}{\lambda}
\newcommand{\gs}{\sigma}
\newcommand{\go}{\omega}
\newcommand{\bga}{\boldsymbol{\alpha}}
\newcommand{\bgd}{\boldsymbol{\delta}}
\newcommand{\bck}[1]{[\![{#1}]\!]}
\newcommand{\sd}{\mathbin{\smallsetminus}}
\newcommand{\jz}{$(\vee,0)$}
\newcommand{\js}{join-semi\-lat\-tice}
\newcommand{\jzs}{\jz-semi\-lat\-tice}
\newcommand{\ajs}{al\-most join-sem\-i\-lat\-tice}
\newcommand{\pjs}{pseu\-do join-sem\-i\-lat\-tice}
\newcommand{\two}{\mathbf{2}}
\newcommand{\ol}[1]{\overline{#1}}
\newcommand{\pI}[1]{\bigl({#1}\bigr)}
\newcommand{\set}[1]{\left\{#1\right\}}
\newcommand{\setm}[2]{\set{{#1}\mid{#2}}}
\newcommand{\vecm}[2]{({#1}\mid{#2})}
\newcommand{\Vecm}[2]{\left({#1}\mid{#2}\right)}
\newcommand{\seq}[1]{\langle{#1}\rangle}
\newcommand{\seql}[1]{{\langle{#1}\rangle}^{\ell}}
\newcommand{\id}{\mathrm{id}}
\newcommand{\dnw}{\mathbin{\downarrow}}
\newcommand{\upw}{\mathbin{\uparrow}}
\newcommand{\Sor}{\mathbin{\bigtriangledown}}
\newcommand{\es}{\varnothing}
\newcommand{\res}{\mathbin{\restriction}}
\newcommand{\ZZ}{\mathbb{Z}}
\newcommand{\QQ}{\mathbb{Q}}
\newcommand{\Bool}{\mathbf{Bool}}
\DeclareMathOperator{\Id}{Id}
\DeclareMathOperator{\Idc}{Id_c}
\DeclareMathOperator{\Cs}{Cs}
\DeclareMathOperator{\Csc}{Cs_c}
\newcommand{\cA}{{\mathcal{A}}}
\newcommand{\cO}{{\mathcal{O}}}
\newcommand{\cP}{{\mathcal{P}}}
\newcommand{\cS}{{\mathcal{S}}}
\newcommand{\cV}{{\mathcal{V}}}
\numberwithin{equation}{section}
\newtheorem*{stat}{\name}
\newcommand{\name}{testing}
\theoremstyle{plain}
\newtheorem{theorem}{Theorem}[section]
\newtheorem{proposition}[theorem]{Proposition}
\newtheorem{corollary}[theorem]{Corollary}
\newtheorem{lemma}[theorem]{Lemma}
\newtheorem{claim}{Claim}
\theoremstyle{definition}
\newtheorem{definition}[theorem]{Definition}
\newtheorem*{problem}{Problem}
\theoremstyle{remark}
\newtheorem{remark}[theorem]{Remark}
\newcommand{\qedc}{{\qed}~{\rm Claim~{\theclaim}.}}
\newcommand{\qedsc}{{\qed}~{\rm Claim.}}
\newenvironment{cproof}
{\begin{proof}[Proof of Claim.]}
{\qedc\renewcommand{\qed}{}\end{proof}}
\numberwithin{figure}{section}
\numberwithin{table}{section}
\newcommand{\ba}{\boldsymbol{a}}
\newcommand{\bb}{\boldsymbol{b}}
\newcommand{\bc}{\boldsymbol{c}}
\newcommand{\bu}{\boldsymbol{u}}
\newcommand{\bx}{\boldsymbol{x}}
\newcommand{\by}{\boldsymbol{y}}
\newcommand{\bz}{\boldsymbol{z}}
\newcommand{\bV}{\boldsymbol{V}}
\newcommand{\bA}{\boldsymbol{A}}
\newcommand{\bB}{\boldsymbol{B}}
\newcommand{\bD}{\boldsymbol{D}}
\newcommand{\bX}{\boldsymbol{X}}
\newcommand{\vx}{\mathsf{x}}
\newcommand{\vy}{\mathsf{y}}
\newcommand{\vz}{\mathsf{z}}
\newcommand{\scL}{\mathbin{\mathscr{L}}}
\title[Cevian operations]%
{Cevian operations on distributive lattices}
\author[F. Wehrung]{Friedrich Wehrung}
\address{LMNO, CNRS UMR 6139\\
D\'epartement de Math\'ematiques\\
Universit\'e de Caen Normandie\\
14032 Caen cedex\\
France}
\email{friedrich.wehrung01@unicaen.fr}
\urladdr{https://wehrungf.users.lmno.cnrs.fr}
\date{\today}
\subjclass[2010]{06D05; 06D35; 06F20; 03E02; 03E05; 18C35; 18A25; 18A30; 18A35; 18B35; 46A55}
\keywords{Cevian; completely normal; lattice-ordered; Abelian; group; convex; ideal; countable; distributive; lattice; condensate; colimit; finitely presented}
\begin{document}

\begin{abstract}
We construct a completely normal bounded distributive lattice~$D$ in which for every pair $(a,b)$ of elements, the set
$\setm{x\in D}{a\leq b\vee x}$ has a countable coinitial subset, such that~$D$ does not carry any binary operation~$\sd$ satisfying the identities $\vx\leq\vy\vee(\vx\sd\vy)$, $(\vx\sd\vy)\wedge(\vy\sd\vx)=0$, and $\vx\sd\vz\leq(\vx\sd\vy)\vee(\vy\sd\vz)$.
In particular, $D$ is not a homomorphic image of the lattice of all finitely generated convex $\ell$-subgroups of any (not necessarily Abelian) \lgrp.
It has~$\aleph_2$ elements.
This solves negatively a few problems stated by Iberkleid,  Mart{\'{\i}}nez, and McGovern in 2011 and recently by the author.
This work also serves as preparation for a forthcoming paper in which we prove that \emph{for any infinite cardinal~$\gl$, the class of Stone duals of spectra of all Abelian \lgrp{s} with order-unit is not closed under $\scL_{\infty\gl}$-elementary equivalence}.
\end{abstract}

\maketitle


\section{Introduction}\label{S:Intro}

It has been known since the seventies that for any Abelian lattice-ordered group (from now on \emph{\lgrp})~$G$, the distributive lattice~$\Idc{G}$ of all finitely generated (equivalently principal) $\ell$-ideals of~$G$ is \emph{completely normal}, that is, it satisfies the statement
 \[
 (\forall a,b)(\exists x,y)(a\vee b=a\vee y=x\vee b\text{ and }
 x\wedge y=0)\,.
 \]
Delzell and Madden found in~\cite{DelMad1994} an example of a completely normal bounded distributive lattice which is not isomorphic to~$\Idc{G}$ for any Abelian \lgrp~$G$.
Since then, the problem of characterizing all lattices of the form~$\Idc{G}$ has been widely open, possibly under various equivalent forms, one of which being the \emph{MV-spectrum problem} (cf. Mun\-di\-ci \cite[Problem~2]{Mund2011}).
The author's paper~\cite{MV1} settles the \emph{countable} case, by proving that complete normality is then sufficient.
However, moving to the uncountable case, we prove in~\cite{MV1} that \emph{the class of all lattices of the form~$\Idc{G}$, for Abelian \lgrp{s}~$G$ with order-unit, is not closed under $\scL_{\infty\go}$-elementary equivalence}.

A remarkable additional property of lattices of the form~$\Idc{G}$, for Abelian \lgrp{s}~$G$, was coined, under different names, on the one hand in Cignoli \emph{et al.}~\cite{CGL}, where it was denoted by~$(\Id\go)$, on the other hand in Iberkleid \emph{et al.}~\cite{IMM2011}, where it was called ``$\gs$-Conrad''.
In~\cite{MV1} we express that property by an $\scL_{\go_1\go_1}$ sentence of lattice theory that we call \emph{having countably based differences} (cf. Subsection~\ref{Su:Posets}).
This property is trivially satisfied in the countable case, but fails for various uncountable examples such as Delzell and Madden's.

In this paper we prove (cf. Theorem~\ref{T:Main}) that requiring countably based differences, together with complete normality, is not sufficient to characterize distributive lattices of the form~$\Idc{G}$ for Abelian \lgrp{s}~$G$.
It turns out that our counterexample also gives a strong negative answer to \cite[Question~4.3.1]{IMM2011}, by proving that ``$\gs$-Conrad does not imply Conrad'' (it was proved in~\cite{IMM2011} that normal-valued Conrad implies $\gs$-Conrad).
It also proves that the implication (4)$\Rightarrow$(5), in \cite[\S~4]{IMM2011}, is strict (\cite{IMM2011} achieved a partial result in that direction).
Our main counterexample has cardinality~$\aleph_2$\,.

The proof of our main result is achieved in several steps.
We observe (cf. Proposition~\ref{P:CevaSclgrp}) that for any (not necessarily Abelian) \lgrp~$G$, the (completely normal, distributive) lattice~$\Csc{G}$ of all finitely generated convex $\ell$-subgroups of~$G$ carries a binary operation~$\sd$ satisfying the identities
$\vx\leq\vy\vee(\vx\sd\vy)$, $(\vx\sd\vy)\wedge(\vy\sd\vx)=0$, and $\vx\sd\vz\leq(\vx\sd\vy)\vee(\vy\sd\vz)$.
We call such operations \emph{Cevian operations} and we call such lattices \emph{Cevian lattices} (Definition~\ref{D:CevaDiff}).

We thus need to construct a non-Cevian completely normal distributive lattice with zero and countably based differences.
In order to achieve this, we first solve the problem at \emph{diagram level}, by constructing (cf. Lemma~\ref{L:IdcANonRepr}) a $\set{0,1}^3$-indexed commutative diagram, of countable completely normal distributive lattices with zero, which is a counterexample to a diagram analogue of a ``local'' form of the main question.
This diagram is obtained by applying the functor~$\Idc$ to a certain \emph{non-com\-mu\-ta\-tive}%
\footnote{We need the non-commutativity of the diagram~$\vec{A}$---otherwise, by definition, $\Idc\vec{A}$ would not be a diagram counterexample!
This will be strongly illustrated in Proposition~\ref{P:CevaSclgrp}.}
diagram of Abelian \lgrp{s}, which we denote by~$\vec{A}$ (cf. Section~\ref{S:vecA}).

The proof of Lemma~\ref{L:IdcANonRepr} rests on a lattice-theoretical interpretation, established in Proposition~\ref{P:LatCeva}, of the configuration underlying Ceva's Theorem in elementary plane geometry.

Our final line of argument relies on the results of the monograph Gillibert and Wehrung~\cite{Larder}, which sets up a machinery making it possible to turn certain \emph{diagram counterexamples} to \emph{object counterexamples}, \emph{via} constructs called \emph{condensates}, infinite combinatorial objects called \emph{lifters}, and a technical result called the \emph{Armature Lemma}.
We summarize the required machinery in Section~\ref{S:CrashCondens}, and we embark on our main result's final proof in Section~\ref{S:aleph2}.

Since, as mentioned above, having countably based differences is an~$\scL_{\go_1\go_1}$ sentence, this is related to the question, stated as \cite[Problem~1]{MV1}, whether there exists an infinite cardinal~$\gl$ such that the class of all lattices of the form~$\Idc{G}$, for Abelian \lgrp{s}~$G$ (equivalenly, Stone dual lattices of spectra of Abelian \lgrp{s}), can be characterized by some class of~$\scL_{\infty\gl}$ sentences.
In our subsequent paper~\cite{NonElt} we prove that this is not so, by proving the even stronger result that \emph{the class of Stone duals of spectra of all Abelian \lgrp{s} with order-unit is not closed under $\scL_{\infty\gl}$-elementary equivalence}.

\section{Notation, terminology, and basic concepts}\label{S:Basic}

\subsection{Sets, posets}\label{Su:Posets}
Following standard set-theoretical notation, we denote by~$\go$ the first infinite ordinal and also the set of all nonnegative integers.
For a natural number~$n$, we denote by~$\aleph_n$ the $n$th transfinite cardinal number, also denoted by~$\go_n$ in case it should be viewed as an ordinal.
The set of all finite subsets of a set~$X$ will be denoted by~$[X]^{<\go}$.
We denote by~$\Pow(X)$, or just~$\Pow{X}$, the powerset of a set~$X$.
By ``countable'' we will always mean ``at most countable''.

For any element~$a$ in a poset (i.e., partially ordered set)~$P$, we set
 \[
 P\dnw a\eqdef\setm{p\in P}{p\leq a}\,,\quad
 P\upw a\eqdef\setm{p\in P}{p\geq a}\,.
 \]
A subset~$X$ of~$P$ is
\begin{itemize}
\item[---]
a \emph{lower subset} (resp., \emph{upper subset}) of~$P$ if $P\dnw x\subseteq X$ (resp., $P\upw x\subseteq X$) whenever $x\in X$;

\item[---]
an \emph{ideal} of~$P$ if it is a nonempty, upward directed lower subset of~$P$;

\item[---]
\emph{coinitial} in~$P$ if
$P=\bigcup\vecm{P\upw x}{x\in X}$\,.
\end{itemize}
For posets~$P$ and~$Q$, a map $f\colon P\to Q$ is \emph{isotone} if $x\leq y$ implies that $f(x)\leq f(y)$ whenever $x,y\in P$.
We let $\two\eqdef\set{0,1}$, ordered by $0<1$.

We refer to Gr\"atzer~\cite{LTF} for standard facts on lattice theory.
A distributive lattice~$D$ with zero is \emph{completely normal} if for all $x,y\in D$ there are $u,v\in D$ such that $x\leq y\vee u$, $y\leq x\vee v$, and $u\wedge v=0$.
Equivalently (replacing~$u$ by $u\wedge x$ and~$v$ by $v\wedge y$), $x\vee y=x\vee v=u\vee y$ and $u\wedge v=0$.
By a result from Monteiro~\cite{Mont1954}, this is equivalent to saying that the specialization order in the Stone dual of~$D$ is a root system (see also Cignoli \emph{et al.}~\cite{CGL}).

For any elements~$a$ and~$b$ in a \js~$S$, we set, following the notation in the author's paper~\cite{MV1},
 \begin{equation}\label{Eq:aominusb}
 a\ominus_Sb\eqdef\setm{x\in S}{a\leq b\vee x}\,.
 \end{equation}
Following~\cite{MV1}, we say that~$S$ has \emph{countably based differences} if $a\ominus_Sb$ has a countable coinitial subset whenever $a,b\in S$.

Following~\cite{MV1}, we define a \jh~$f\colon A\to B$, between \js{s}, to be \emph{closed} if for all $a,a'\in A$ and~$b\in B$, if $b\in f(a)\ominus_Bf(a')$, there exists $x\in a\ominus_Aa'$ such that $f(x)\leq b$.
In particular, if~$X$ is a coinitial subset of~$a\ominus_Aa'$, then $f[X]$ is a coinitial subset of $f(a)\ominus_Bf(a')$.
We thus get the following lemma.

\begin{lemma}\label{L:Closed2CBD}
Let~$A$ and~$B$ be \js{s} and let $f\colon A\to B$ be a closed \jh.
For all $a,a'\in A$, if $a\ominus_Aa'$ has a countable coinitial subset, then $f(a)\ominus_Bf(a')$ has a countable coinitial subset.
\end{lemma}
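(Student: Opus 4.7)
The plan is essentially to package the parenthetical observation made in the paragraph immediately preceding the lemma into a proof. Pick a countable coinitial subset $X$ of $a\ominus_A a'$ and show that $f[X]$ is a countable coinitial subset of $f(a)\ominus_B f(a')$.

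I would verify three things in turn. First, $f[X]\subseteq f(a)\ominus_B f(a')$: any $x\in X$ satisfies $a\leq a'\vee x$ by definition of $a\ominus_A a'$, and since $f$ is a \jh, applying $f$ gives $f(a)\leq f(a'\vee x)=f(a')\vee f(x)$, so $f(x)\in f(a)\ominus_B f(a')$. Second, $f[X]$ is coinitial in $f(a)\ominus_B f(a')$: given any $b\in f(a)\ominus_B f(a')$, the closedness hypothesis on $f$ yields some $x\in a\ominus_A a'$ with $f(x)\leq b$; coinitiality of $X$ in $a\ominus_A a'$ then produces $x'\in X$ with $x'\leq x$, whence $f(x')\leq f(x)\leq b$ by isotony of $f$ (an immediate consequence of $f$ being a \jh). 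Third, $f[X]$ is countable as the image of the countable set $X$.

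There is no genuine obstacle: the lemma is a direct unpacking of the definition of \emph{closed} together with the definition of \emph{coinitial}, and its role in the paper is essentially as a convenient citation handle. The only ``step'' worth explicitly mentioning is that the definition of closed \jh\ gives coinitiality ``in the correct direction''—i.e., for each $b\in f(a)\ominus_B f(a')$ one obtains an $x$ below it in a uniform way—which is precisely what is needed to transport coinitiality from $A$ to $B$.
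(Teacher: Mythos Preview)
Your proof is correct and matches the paper's approach exactly: the paper simply remarks that if~$X$ is coinitial in $a\ominus_Aa'$ then~$f[X]$ is coinitial in $f(a)\ominus_Bf(a')$, and states the lemma as an immediate consequence---you have merely written out the details of that remark.
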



For \lgrp{s} we refer to Bigard \emph{et al.} \cite{BKW}, Anderson and Feil~\cite{AnFe}.
All our \lgrp{s} will be written additively (even in the non-com\-mu\-ta\-tive case), with the lattice operations~$\wedge$ and~$\vee$ being given higher precedence than the group operations (e.g., $u+x\wedge y-v=u+(x\wedge y)-v$).
For any \lgrp~$G$, the lattice~$\Cs{G}$ of all convex $\ell$-subgroups of~$G$ is a distributive algebraic lattice, of which the collection~$\Csc{G}$ of all finitely generated convex $\ell$-subgroups is a sublattice; moreover, $\Csc{G}$ is completely normal.
The elements of~$\Csc{G}$ are exactly those of the form
 \[
 \seq{x}_G\eqdef\setm{y\in G}{(\exists n<\go)(|y|\leq n|x|)}\,,
 \quad\text{for }x\in G\ (\text{equivalently, for }x\in G^+)\,.
 \]
We refer the reader to Iberkleid \emph{et al.} \cite[\S~1.2]{IMM2011} for a more detailed overview of the matter.

The lattice~$\Id{G}$ of all $\ell$-ideals (i.e., normal convex $\ell$-subgroups) of~$G$ is a distributive algebraic lattice, isomorphic to the congruence lattice of~$G$.
The \jzs~$\Idc{G}$ of all finitely generated $\ell$-ideals of~$G$ may not be a lattice (cf. Remark~\ref{Rk:CevaSclgrp} for further explanation).
Its elements are exactly those of the form
 \begin{multline*}
 \seql{x}_G\eqdef
 \{y\in G\mid
 \text{there are }n<\go\text{ and conjugates }x_1\,,\dots,x_n\text{ of }|x|\text{ such that}\\
 |y|\leq x_1+\cdots+x_n\}\,,
 \quad\text{for }x\in G\ (\text{equivalently, for }x\in G^+)\,.
 \end{multline*}
As observed in \cite[Subsection~2.2]{MV1}, the assignment~$\Idc$ naturally extends to a \emph{functor} from Abelian \lgrp{s} and \lhom{s} to completely normal distributive lattices with zero and closed $0$-lattice homomorphisms.
In a similar manner, the assignment~$\Csc$ naturally extends to a \emph{functor} from \lgrp{s} and \lhom{s} to completely normal distributive lattices with zero and closed $0$-lattice homomorphisms.
Of course, if~$G$ is Abelian, then $\Cs{G}=\Id{G}$, $\seq{x}_G=\seql{x}_G$\,, and so on.

For any \lgrp~$G$ and any $x,y\in G^+$, let $x\propto y$ hold if $x\leq ny$ for some positive integer~$n$, and let $x\asymp y$ hold if $x\propto y$ and $y\propto x$.

\subsection{Open polyhedral cones}\label{Su:OpPolCones}
Throughout the paper we will denote by~$\QQ$ the ordered field of all rational numbers and by~$\QQ^+$ its positive cone.
For every positive integer~$n$ and every $n$-ary term~$t$ in the similarity type $(0,+,-,\vee,\wedge)$ of \lgrp{s} (in short \emph{$\ell$-term}), we set
 \[
 \bck{t(\vx_1,\dots,\vx_n)\neq0}_n\eqdef
 \setm{(x_1,\dots,x_n)\in(\QQ^+)^n}{t(x_1,\dots,x_n)\neq0}\,,
 \]
and similarly for $\bck{t(\vx_1,\dots,\vx_n)>0}_n$\,.
In particular, for every positive integer~$n$ and all rational numbers~$\gl_1$\,,  \dots, $\gl_n$\,, we get
 \[
 \bck{\gl_1\vx_1+\cdots+\gl_n\vx_n>0}_n=
 \setm{(x_1,\dots,x_n)\in(\QQ^+)^n}
 {\gl_1x_1+\cdots+\gl_nx_n>0}\,;
 \]
we will call such sets \emph{open half-spaces}%
\footnote{This will include ``degenerate'' cases such as the one where all~$\gl_i$ are zero (resp., positive) and should not cause any problem in the sequel.}
of $(\QQ^+)^n$.
Define a \emph{basic open polyhedral cone} of~$(\QQ^+)^n$ as the intersection of a finite, nonempty collection of open half-spaces, and define a \emph{strict open polyhedral cone} of~$(\QQ^+)^n$ as a finite union of basic open polyhedral cones.
Observe that no strict open polyhedral cone of~$(\QQ^+)^n$ contains~$0$ as an element.
For $n\geq2$, the lattice~$\cO_n$ of all strict open polyhedral cones of~$(\QQ^+)^n$ is a bounded distributive lattice, with zero the empty set and with unit $(\QQ^+)^n\setminus\set{0}$.

\subsection{Non-commutative diagrams}\label{Su:NonCommDiag}
Several sections in the paper will involve the concept of a ``non-com\-mu\-ta\-tive diagram''.
A (\emph{commutative}) \emph{diagram}, in a category~$\cS$, is often defined as a functor~$D$ from a category~$\cP$ (the ``indexing category'' of the diagram) to~$\cS$.
Allowing any morphism in~$\cP$ to be sent to more than one morphism in~$\cS$, we get~$D$ as a kind of ``non-deterministic functor''.
Specializing to the case where~$\cP$ is the category naturally assigned to a poset~$P$, we get the following definition.

\begin{definition}\label{D:DiagramP}
Let~$P$ be a poset and let~$\cS$ be a category.
A \emph{$P$-indexed diagram in~$\cS$} is an assignment~$D$, sending each element~$p$ of~$P$ to an object~$D(p)$ (or~$D_p$) of~$\cS$ and each pair $(p,q)$ of elements of~$P$, with $p\leq q$, to a \emph{nonempty set} $D(p,q)$ of morphisms from~$D(p)$ to~$D(q)$\,, such that
\begin{enumerater}
\item $\id_{D(p)}\in D(p,p)$ for every $p\in P$,

\item Whenever $p\leq q\leq r$, $u\in D(p,q)$, and $v\in D(q,r)$, $v\circ u$ belongs to $D(p,r)$.
\end{enumerater}
We say that~$D$ is a \emph{commutative diagram} if each~$D(p,q)$, for~$p\leq q$ in~$P$, is a singleton.
\end{definition}

We will often write poset-indexed commutative diagrams in the form
 \[
 \vec{D}=\vecm{D_p,\gd_p^q}{p\leq q\text{ in }P}\,,
 \]
where all~$D_p$ are objects and all $\gd_p^q\colon D_p\to D_q$ are morphisms subjected to the usual commutation relations (i.e., $\gd_p^p=\id_{D_p}$, $\gd_p^r=\gd_q^r\circ\gd_p^q$ whenever $p\leq q\leq r$); hence $\vec{D}(p,q)=\set{\gd_p^q}$.
If~$P$ is a directed poset we will say that~$\vec{D}$ is a \emph{direct system}.

The following construction will be briefly mentioned in Proposition~\ref{P:vecAIdcComm}, which will play a prominent role in our forthcoming paper~\cite{NonElt}.

\begin{definition}\label{D:D^I}
Let~$I$ be a set, let~$\cS$ be a category with all $I$-indexed products, let~$P$ be a poset, and let~$D$ be a $P$-indexed diagram in~$\cS$.
Denoting by~$P^I$ the $I$-th cartesian power of the poset~$P$, we define a $P^I$-indexed diagram~$D^I$ in~$\cS$ by setting
\begin{enumerater}
\item
$D^I{\vecm{p_i}{i\in I}}\eqdef\prod_{i\in I}D(p_i)$;

\item
whenever $p=\vecm{p_i}{i\in I}$ and $q=\vecm{q_i}{i\in I}$ in~$P$ with $p\leq q$, $D^I(p,q)$ consists of all morphisms of the form $\prod_{i\in I}f_i$ where each $f_i\in D(p_i\,,q_i)$.
\end{enumerater}
\end{definition}

\section{A lattice-theoretical version of Ceva's Theorem}
\label{S:LatCeva}
The goal of this section is to establish Proposition~\ref{P:LatCeva}.
This result solves a problem, mostly of lattice-theoretical nature, on open polyhedral cones in dimension three; its proof involves the main configuration underlying Ceva's Theorem in elementary plane geometry.

Although we will only need to apply Proposition~\ref{P:LatCeva} to the ordered field~$\QQ$ of all rational numbers, it does not bring any additional complexity to state it over an arbitrary totally ordered division ring~$\Bbbk$.
For such a ring, we set
 \begin{align*}
 \Bbbk^+&\eqdef\setm{x\in\Bbbk}{x\geq0}\,,\\
 \Bbbk^{++}&\eqdef\setm{x\in\Bbbk}{x>0}\,,\\
 \OK&\eqdef\Bbbk^+\cup\set{\infty}\,,
 \quad\text{where we declare that }x<\infty
 \text{ whenever }x\in\Bbbk^+\,.
 \end{align*}
For all $x,y\in\OK$, we write
 \[
 [x,y]\eqdef\setm{t\in\OK}{x\leq t\leq y}\,,\quad
 \co{x,y}\eqdef\setm{t\in\OK}{x\leq t<y}\,,
 \]
and so on.
We denote by~$\cO(\OK)$ the set of all finite unions of intervals of~$\OK$ of one of the forms $\co{0,x}$\,, $\oc{y,\infty}$, or $\oo{x,y}$ with $x,y\in\OK$.
For a nonzero pair $(x,y)$ of elements of~$\Bbbk^+$, the expression $x^{-1}y$ is given its usual meaning in~$\Bbbk$ if $x>0$, and extended to the case where $x=0$ (thus $y>0$) by setting $0^{-1}y=\infty$.

\begin{proposition}\label{P:LatCeva}
Let~$\Bbbk$ be a totally ordered division ring.
For all integers $i,j$ with $1\leq i<j\leq 3$, let $U_{ij}\in\cO(\OK)$ and set
 \[
 C_{ij}\eqdef\setm{(x_1,x_2,x_3)\in(\Bbbk^+)^3}
 {(x_i\,,x_j)\neq(0,0)\text{ and }x_i^{-1}x_j\in U_{ij}}\,.
 \]
Suppose that the following statements hold:
\begin{enumerater}
\item\label{0inU1223}
$0\in U_{12}\cap U_{23}\cap U_{13}$\,;

\item\label{U1223notfull}
$\co{0,\infty}\not\subseteq U_{12}$ and $\co{0,\infty}\not\subseteq U_{23}$\,;

\item\label{CevaIneq}
$C_{12}\cap C_{23}\subseteq C_{13}\subseteq C_{12}\cup C_{23}$\,.
\end{enumerater}
Then there are $x,y\in\Bbbk^{++}$ such that $U_{12}=\co{0,x}$\,, $U_{23}=\co{0,y}$\,, and $U_{13}=\co{0,xy}$\,.
\end{proposition}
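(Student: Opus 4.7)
The plan is to translate condition~(iii) into clean multiplicative statements on the sets $U_{ij}\subseteq\OK$, exploit the telescoping identity $(x_1^{-1}x_2)(x_2^{-1}x_3)=x_1^{-1}x_3$ (valid in any division ring), and then use~(ii) to extract the numerical data pinning down the intervals. Specialising $(x_1,x_2,x_3)$ to $(1,x_2,x_3)$ and $(x_1,x_2,1)$ in~(iii), with positive coordinates, yields three algebraic consequences:
\begin{itemize}
\item[(A)] if $\alpha,\beta\in\Bbbk^{++}$ with $\alpha\in U_{12}$ and $\beta\in U_{23}$, then $\alpha\beta\in U_{13}$;
\item[(B)] if $c\in U_{13}\cap\Bbbk^{++}$ and $\alpha\in\Bbbk^{++}$, then $\alpha\in U_{12}$ or $\alpha^{-1}c\in U_{23}$;
\item[(B$'$)] if $c\in U_{13}\cap\Bbbk^{++}$ and $\beta\in\Bbbk^{++}$, then $\beta\in U_{23}$ or $c\beta^{-1}\in U_{12}$.
\end{itemize}
In the commutative case (B) and (B$'$) coincide, but over a general division ring they must be kept separate.

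Before invoking these, I would dispose of the value $\infty$ by feeding degenerate triples into~(iii). The triple $(0,x_2,0)$ with $x_2>0$ shows via $C_{12}\cap C_{23}\subseteq C_{13}$ that $\infty\in U_{12}$ would place this triple in $C_{13}$, which it is not, hence $\infty\notin U_{12}$. Using this and~(ii), the triple $(0,x_2,x_3)$ with $x_2,x_3>0$ in $C_{13}\subseteq C_{12}\cup C_{23}$ rules out $\infty\in U_{13}$. Finally, $(x_1,0,x_3)$ in $C_{12}\cap C_{23}\subseteq C_{13}$ shows that $\infty\in U_{23}$ would force $\Bbbk^{++}\subseteq U_{13}$; a further appeal to~(iii) combined with~(ii) then produces a contradiction, so $\infty\notin U_{23}$. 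Each $U_{ij}$ is thus a finite union of bounded intervals of $\Bbbk^+$; setting
\[
a_{ij}\eqdef\sup\setm{t\in\OK}{\co{0,t}\subseteq U_{ij}},
\]
a brief case analysis on the allowed interval shapes yields $a_{ij}\in\Bbbk^{++}$, $\co{0,a_{ij}}\subseteq U_{ij}$, and $a_{ij}\notin U_{ij}$.

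The central squeeze runs as follows. Monotonicity of multiplication in $\Bbbk^{++}$ together with~(A) delivers $\sup U_{12}\cdot\sup U_{23}\leq\sup U_{13}$. Applying~(B) with $\alpha\eqdef a_{12}$, which lies outside $U_{12}$, yields $a_{12}^{-1}c\in U_{23}$ for every positive $c\in U_{13}$, whence $\sup U_{13}\leq a_{12}\cdot\sup U_{23}$; symmetrically, (B$'$) with $\beta\eqdef a_{23}$ gives $\sup U_{13}\leq\sup U_{12}\cdot a_{23}$. Cancelling the positive quantities $\sup U_{23}$ and $\sup U_{12}$ in these chains forces $\sup U_{12}=a_{12}$ and $\sup U_{23}=a_{23}$; since these sups are unattained and $\co{0,a_{ij}}\subseteq U_{ij}$, we conclude $U_{12}=\co{0,a_{12}}$ and $U_{23}=\co{0,a_{23}}$. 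The identity $\co{0,a_{12}}\cdot\co{0,a_{23}}=\co{0,a_{12}a_{23}}$ (valid in any totally ordered division ring) combined with~(A) gives $\co{0,a_{12}a_{23}}\subseteq U_{13}$, while $\sup U_{13}\leq a_{12}a_{23}$ together with the observation that $a_{12}a_{23}\in U_{13}$ would, via~(B) with $\alpha=a_{12}$, force $a_{23}\in U_{23}$---a contradiction---gives $U_{13}\subseteq\co{0,a_{12}a_{23}}$. Setting $x\eqdef a_{12}$ and $y\eqdef a_{23}$ completes the proof.

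The main obstacle I anticipate is carrying the non-commutativity of~$\Bbbk$ cleanly throughout: (B) and (B$'$) must be kept genuinely separate (both are needed, to squeeze $\sup U_{12}$ and $\sup U_{23}$ from opposite sides), and the sup-computations must be done on the correct side of the product. A secondary subtlety is the bookkeeping of the degenerate triples used to rule out $\infty\in U_{ij}$, where the asymmetry in the allowed interval shapes of $\cO(\OK)$---closed at~$0$ only via $\co{0,x}$; closed at~$\infty$ only via $\oc{y,\infty}$---is used in an essential way.
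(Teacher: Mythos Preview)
Your approach is sound and genuinely different from the paper's, but there is one gap you should patch.

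\textbf{The gap.} From $\infty\notin U_{ij}$ you conclude that ``each $U_{ij}$ is a finite union of bounded intervals of~$\Bbbk^+$''. This does not follow: the interval type $\oo{x,y}$ in~$\cO(\OK)$ allows $y=\infty$, so $\oo{p,\infty}$ is a legitimate constituent not containing~$\infty$. Boundedness is, however, recoverable from your own tools. If~$U_{12}$ were unbounded, its rightmost constituent would be some $\oo{p,\infty}$; multiplying this against $\co{0,a_{23}}\subseteq U_{23}$ via~(A) gives $\oo{0,\infty}\subseteq U_{13}$, and then~(B) with $\alpha=a_{12}$ forces $\Bbbk^{++}\subseteq U_{23}$, contradicting~(ii). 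The symmetric argument using~(B$'$) with $\beta=a_{23}$ disposes of unbounded~$U_{23}$, and then $\sup U_{13}\leq a_{12}\sup U_{23}$ bounds~$U_{13}$. Once this is inserted, your cancellation step (which needs $\sup U_{12}$ and $\sup U_{23}$ finite) goes through. You should also remark that ``$\sup$'' here means the maximum of finitely many right endpoints in~$\OK$, since a general totally ordered division ring is not Dedekind complete.

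\textbf{Comparison with the paper.} The paper argues geometrically, in four claims: first that~$U_{23}$ is an initial interval $\co{0,y}$, then that~$U_{12}$ is $\co{0,x}$, then $\co{0,xy}\subseteq U_{13}$, and finally $U_{13}\subseteq\co{0,xy}$. Each claim is proved by choosing a carefully placed point in the $2$-simplex and chasing it through the containments~(iii), with pictures of Ceva-type configurations guiding the choices. Your route is more algebraic: you extract the multiplicative consequences~(A), (B), (B$'$) once, then run a single sup-squeeze. Your argument is more uniform and makes the role of non-commutativity explicit (the need for both~(B) and~(B$'$) is exactly the need to cancel on opposite sides); the paper's argument is more self-contained at each step and never needs to name~$\sup U_{ij}$ or worry about its existence. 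Both are correct once your boundedness gap is closed.
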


The conclusion of Proposition~\ref{P:LatCeva} is represented in Figures~\ref{Fig:Ceva1} and~\ref{Fig:CevaCU}.
The configuration represented in Figure~\ref{Fig:Ceva1} will be called a \emph{Ceva configuration} [for the sets~$C_{ij}$].
The sets~$C_{ij}$ are emphasized with a gray shade in Figure~\ref{Fig:CevaCU}.
The sets~$U_{ij}$ are marked in thick black lines, on the boundary of the main triangle, on both pictures.

In all the figures involved in Section~\ref{S:LatCeva}, open polyhedral cones of~$(\Bbbk^+)^3$ will be represented by their intersection with the $2$-simplex
 \[
 \setm{(x_1,x_2,x_3)\in(\Bbbk^+)^3}{x_1+x_2+x_3=1}\,,
 \]
and points will be represented by their \emph{homogeneous coordinates}, so
 \[
 \seq{x,y,z}=\setm{(\gl x,\gl y,\gl z)}{\gl\in\Bbbk\setminus\set{0}}\,.
 \]

\begin{figure}[htb]
{\centering
\begin{tikzpicture}
\draw (0,0)--(4,0)--(2,4)--(0,0);
\draw (2,0)--(2,4);
\draw (0,0)--(3,2);
\draw (1,2)--(4,0);
\draw[line width=2pt] (0,0)--(2,0);
\draw[line width=2pt] (4,0)--(3,2);
\draw[line width=2pt] (1,2)--(0,0);
\mycircle{(0,0)};
\mycircle{(2,0)};
\mycircle{(4,0)};
\mycircle{(3,2)};
\mycircle{(1,2)};
\mycircle{(2,4/3)};
\node at (-.8,0) {$\seq{1,0,0}$};
\node at (4.8,0) {$\seq{0,1,0}$};
\node[above] at (2,4.1) {$\seq{0,0,1}$};
\mycircle{(2,4)};
\node[below] at (2.3,-.1) {$\seq{1,x,0}$};
\node at (1,-.3) {$U_{12}$};
\node at (3.8,2) {$\seq{0,1,y}$};
\node at (4,1) {$U_{23}$};
\node at (.1,2) {$\seq{1,0,xy}$};
\node at (0,1) {$U_{13}$};
\end{tikzpicture}
}
\caption{A Ceva configuration}\label{Fig:Ceva1}
\end{figure}
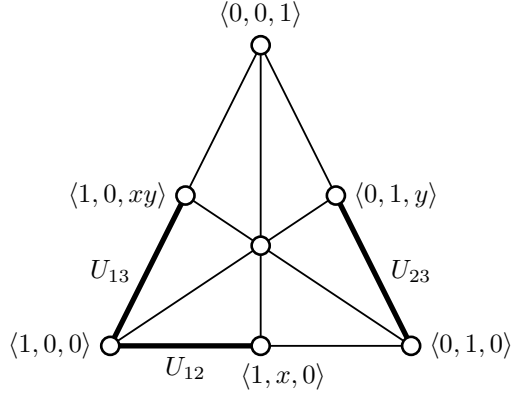

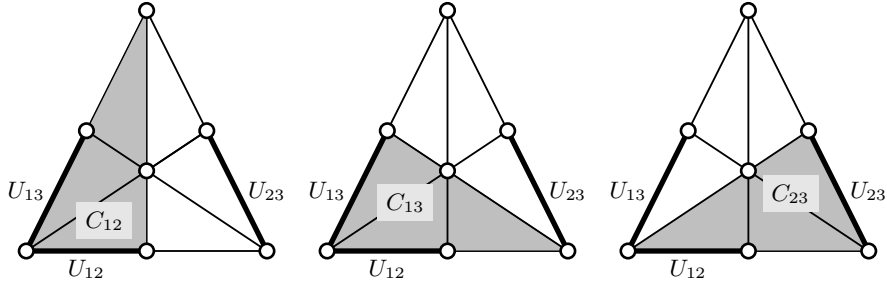
\begin{figure}[htb]
{\centering
\begin{tikzpicture}[scale=.8]
\draw (0,0)--(4,0)--(2,4)--(0,0);
\draw (2,0)--(2,4);
\draw (0,0)--(3,2);
\draw[fill=lightgray,line width=0pt] (0,0)--(2,0)--(2,4)--(0,0);
\draw[line width=2pt] (4,0)--(3,2);
\draw[line width=2pt] (1,2)--(0,0);
\draw[line width=2pt] (0,0)--(2,0);
\mycircle{(2,0)};
\mycircle{(2,4)};
\node at (1,-.3) {\small$U_{12}$};
\node at (4,1) {\small$U_{23}$};
\node at (0,1) {\small$U_{13}$};

\draw (0,0)--(3,2);
\draw (1,2)--(4,0);
\mycircle{(0,0)};
\mycircle{(4,0)};
\mycircle{(2,4/3)};
\mycircle{(1,2)};
\mycircle{(3,2)};

\draw (5,0)--(9,0)--(7,4)--(5,0);
\draw[fill=lightgray,line width=0pt] (5,0)--(9,0)--(6,2);
\draw (7,0)--(7,4);
\draw (5,0)--(8,2);
\draw (6,2)--(9,0);
\draw[line width=2pt] (5,0)--(7,0);
\draw[line width=2pt] (9,0)--(8,2);
\draw[line width=2pt] (6,2)--(5,0);
\mycircle{(5,0)};
\mycircle{(7,0)};
\mycircle{(9,0)};
\mycircle{(8,2)};
\mycircle{(6,2)};
\mycircle{(7,4/3)};
\mycircle{(7,4)};
\node at (6,-.3) {\small$U_{12}$};
\node at (9,1) {\small$U_{23}$};
\node at (5,1) {\small$U_{13}$};

\draw (10,0)--(14,0)--(12,4)--(10,0);
\draw[fill=lightgray,line width=0pt] (10,0)--(14,0)--(13,2);
\draw (12,0)--(12,4);
\draw (10,0)--(13,2);
\draw (11,2)--(14,0);
\draw[line width=2pt] (10,0)--(12,0);
\draw[line width=2pt] (14,0)--(13,2);
\draw[line width=2pt] (11,2)--(10,0);
\mycircle{(10,0)};
\mycircle{(12,0)};
\mycircle{(14,0)};
\mycircle{(13,2)};
\mycircle{(11,2)};
\mycircle{(12,4/3)};
\mycircle{(12,4)};
\node at (11,-.3) {\small$U_{12}$};
\node at (14,1) {\small$U_{23}$};
\node at (10,1) {\small$U_{13}$};

\node[fill=gray!20] at (1.3,.5) {\small$C_{12}$};
\node[fill=gray!20] at (6.3,.8) {\small$C_{13}$};
\node[fill=gray!20] at (12.7,.95) {\small$C_{23}$};
\end{tikzpicture}
}
\caption{The sets $C_{12}$\,, $C_{13}$\,, and~$C_{23}$}
\label{Fig:CevaCU}
\end{figure}

\begin{proof}[Proof of Proposition~\ref{P:LatCeva}]
Say that a member of~$\cO(\OK)$ is \emph{initial} if it has the form $\co{0,z}$ for some $z\in\Bbbk^{++}$.

\setcounter{claim}{0}

\begin{claim}\label{Cl:U23init}
The set~$U_{23}$ is initial.
\end{claim}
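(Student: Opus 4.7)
The plan is to convert the set-theoretic Ceva inclusions of condition~(iii) into explicit scalar multiplicative rules on $U_{12}$, $U_{23}$, $U_{13}$ by specializing to the one-parameter family of points $(x_1,x_2,x_3)=(1,y,yz)$, where $y,z\in\Bbbk^+$. A direct computation shows that such a point lies in $C_{12}$ iff $y\in U_{12}$, in $C_{23}$ iff $y>0$ and $z\in U_{23}$, and in $C_{13}$ iff $yz\in U_{13}$. Consequently, (iii) reduces to the conjunction of two implications: \emph{(A)} if $y>0$, $y\in U_{12}$, and $z\in U_{23}$, then $yz\in U_{13}$; and \emph{(B)} if $yz\in U_{13}$, $y>0$, and $z\notin U_{23}$, then $y\in U_{12}$.

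Supposing toward a contradiction that $U_{23}$ is not initial, I would single out a pair $z',z^*\in\Bbbk^{++}$ with $z^*>z'$, $z'\notin U_{23}$, and $z^*\in U_{23}$. Concretely, set $y_0:=\sup\setm{t\in\Bbbk^+}{\co{0,t}\subseteq U_{23}}$ (computed in~$\OK$). Hypothesis~(i) together with the allowed interval shapes of $\cO(\OK)$ forces $y_0>0$, hypothesis~(ii) forces $y_0<\infty$, and a short case analysis on which allowed interval of $U_{23}$ could contain $y_0$ rules that out, so $y_0\notin U_{23}$. Non-initiality then supplies some element of $U_{23}$ strictly above $y_0$; inspecting the interval in which that element lies, one can take it in $\Bbbk^{++}$, which is the desired $z^*$. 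I set $z':=y_0$ and $\lambda:=z^*/z'>1$.

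The heart of the argument is a ``double use'' of (A) and (B). Fix any $u\in U_{13}\cap\Bbbk^{++}$. Applying (B) with $y:=u/z'>0$ and $z:=z'\notin U_{23}$ yields $u/z'\in U_{12}$; then applying (A) with that same $y$ together with $z:=z^*\in U_{23}$ yields $\lambda u=(u/z')\cdot z^*\in U_{13}$. Hence $U_{13}\cap\Bbbk^{++}$ is closed under multiplication by $\lambda$.

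To finish, I would run the analogous maximal-initial analysis on $U_{13}$: let $w_0:=\sup\setm{t\in\Bbbk^+}{\co{0,t}\subseteq U_{13}}$. The same reasoning as for $y_0$ gives $w_0>0$. If $w_0\in\Bbbk^{++}$, then multiplying $\oo{0,w_0}\subseteq U_{13}$ by~$\lambda$ and reusing $0\in U_{13}$ gives $\co{0,\lambda w_0}\subseteq U_{13}$, and since $\lambda w_0>w_0$ this contradicts the definition of $w_0$. Otherwise $w_0=\infty$, so $\co{0,\infty}\subseteq U_{13}$; picking $y^*\in\Bbbk^{++}\setminus U_{12}$ (afforded by hypothesis~(ii) applied to $U_{12}$), rule~(B) applied to each $z\in\Bbbk^{++}$ forces $z\in U_{23}$, so $\co{0,\infty}\subseteq U_{23}$, contradicting~(ii). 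The main trap I expect to sidestep is the temptation to iterate the $\lambda$-closure unboundedly to cover all of~$\Bbbk^{++}$: in a non-Archimedean $\Bbbk$ the sequence $\lambda^n w_0$ can remain bounded, so that strategy would fail; instead, one single application of the $\lambda$-closure against the maximal initial segment of~$U_{13}$ produces the contradiction without any Archimedean hypothesis on~$\Bbbk$.
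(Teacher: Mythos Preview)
Your argument is correct, but it follows a genuinely different route from the paper's. The paper argues directly with a single well-chosen point: writing the leftmost interval of~$U_{12}$ as~$\co{0,x}$ and picking~$v$ in the second interval of~$U_{23}$ (with left endpoint~$y$), it observes that $(1,xyv^{-1},xy)\in C_{12}\cap C_{23}\subseteq C_{13}$, whence $xy\in U_{13}$; then $(1,x,xy)\in C_{13}\subseteq C_{12}\cup C_{23}$ forces either $x\in U_{12}$ or $y\in U_{23}$, both impossible. Your approach instead extracts from~(iii) the scalar rules~(A) and~(B), combines them into a closure of $U_{13}\cap\Bbbk^{++}$ under multiplication by some $\lambda>1$, and plays that off against the maximal initial segment of~$U_{13}$ (with a separate disposal of the case $w_0=\infty$ via~$U_{12}$). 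The paper's argument is shorter, more geometric, and needs no supremum bookkeeping; yours is more structural, and your explicit observation that a single dilation step against the maximal initial interval suffices (rather than an Archimedean iteration of~$\lambda^n$) is exactly the right way to make the method work over an arbitrary totally ordered division ring. One minor point worth making explicit in a write-up: the suprema~$y_0$ and~$w_0$ exist in~$\OK$ because each $U_{ij}\in\cO(\OK)$ decomposes into finitely many maximal intervals with endpoints in~$\OK$, so~$y_0$ and~$w_0$ are simply the right endpoints of the leftmost such intervals; this is implicit in your ``short case analysis'' but deserves a sentence, since general subsets of a non-Archimedean~$\Bbbk$ need not have suprema.
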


\begin{cproof}
(cf. Figure~\ref{Fig:U23init}).
{}From Assumption~\eqref{0inU1223} it follows that the leftmost interval of~$U_{12}$ has the form~$\co{0,x}$, where $0<x\leq\infty$.
{}From Assumption~\eqref{U1223notfull} it follows that $x<\infty$, so $x\in\Bbbk^{++}$.
A similar argument applies to~$U_{23}$\,.

Now suppose that~$U_{23}$ is not initial.
{}From Assumptions~\eqref{0inU1223} and~\eqref{U1223notfull} it follows that the second leftmost interval of~$U_{23}$ has one of the the forms $\oo{y,y'}$ or $\oc{y,y'}$ where $0<y<y'\leq\infty$ (and $y'=\infty$ in the second case).
Pick $v\in\oo{y,y'}$\,; observe that $v\in U_{23}$\,.
The element $u\eqdef xyv^{-1}$ belongs to $\oo{0,x}$, thus to~$U_{12}$\,; whence $(1,u,xy)\in C_{12}$\,.
Moreover, the element $u^{-1}xy=v$ belongs to~$U_{23}$\,, thus $(1,u,xy)\in C_{23}$\,.
Using Assumption~\eqref{CevaIneq}, follows that $(1,u,xy)\in C_{13}$\,, that is, $xy\in U_{13}$\,.
It follows that $(1,x,xy)\in C_{13}$\,, thus, by Assumption~\eqref{CevaIneq}, either $(1,x,xy)\in C_{12}$ or $(1,x,xy)\in C_{23}$\,.
In the first case, $x\in U_{12}$\,, a contradiction.
In the second case, $y\in U_{23}$\,, a contradiction.
\end{cproof}
\begin{figure}[htb]
{\centering
\begin{tikzpicture}
\draw (0,0)--(6,0)--(3,6)--(0,0);
\draw (0,0)--(5,2);
\draw (0,0)--(4,4);
\draw (2,0)--(3,6);
\draw (4,0)--(3,6);
\draw (1.5,3)--(6,0);
\draw (12/5,24/5)--(6,0);
\mycircle{(0,0)};
\mycircle{(2,0)};
\mycircle{(4,0)};
\mycircle{(6,0)};
\mycircle{(5,2)};
\mycircle{(4,4)};
\mycircle{(3,6)};
\mycircle{(1.5,3)};
\mycircle{(12/5,24/5)};
\mycircle{(12/5,12/5)};
\mycircle{(15/4,3/2)};
\node[above] at (3,6) {$\seq{0,0,1}$};
\node at (-.8,0) {$\seq{1,0,0}$};
\node at (6.8,0) {$\seq{0,1,0}$};
\node[below] at (2,0) {$\seq{1,u,0}$};
\node[below] at (4,0) {$\seq{1,x,0}$};
\node at (5.8,2) {$\seq{0,1,y}$};
\node at (4.8,4) {$\seq{0,1,v}$};
\node at (.7,3) {$\seq{1,0,xy}$};
\node at (1.6,24/5) {$\seq{1,0,xv}$};
\node[fill=white,below] at (2.05,2.1) {$\seq{1,u,xy}$};
\node[fill=white,below] at (3.9,1.2) {$\seq{1,x,xy}$};
\end{tikzpicture}
}
\caption{Illustrating the proof of Proposition~\ref{P:LatCeva}, Claim~\ref{Cl:U23init}}
\label{Fig:U23init}
\end{figure}
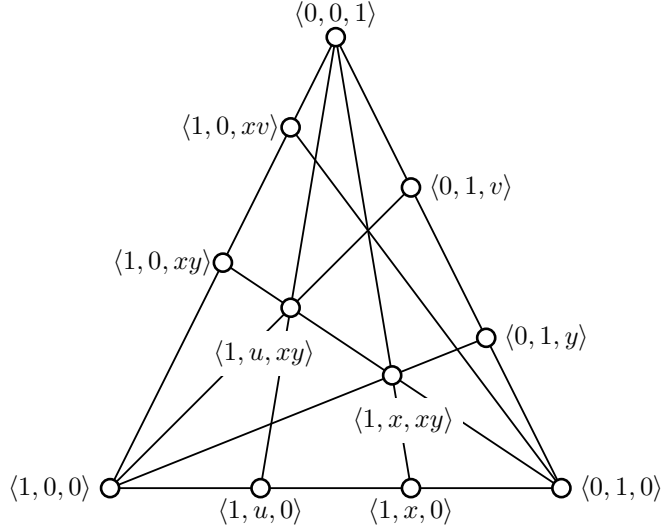

{}From now on we shall write $U_{23}=\co{0,y}$ where $y\in\Bbbk^{++}$.

\begin{claim}\label{Cl:U12init}
The set~$U_{12}$ is initial.
\end{claim}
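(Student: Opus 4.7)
My plan is to argue by contradiction via a Ceva-like point construction analogous to the one in Claim~\ref{Cl:U23init}, but now exploiting the stronger fact just established, namely $U_{23}=\co{0,y}$. First I would suppose $U_{12}$ is not initial; the opening of the proof of Claim~\ref{Cl:U23init} (via Assumptions~\eqref{0inU1223} and~\eqref{U1223notfull}) gives the leftmost interval of $U_{12}$ as $\co{0,x}$ for some $x\in\Bbbk^{++}$. Because the other admissible interval forms in~$\cO(\OK)$ (namely $\oo{\cdot,\cdot}$ and $\oc{\cdot,\infty}$) are open at their left endpoints, no component of $U_{12}$ other than $\co{0,x}$ can contain $x$; so $x\notin U_{12}$. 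Non-initiality then furnishes some $u\in U_{12}$ with $u>x$.

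Next I would pick any $w\in\Bbbk^{++}$ with $xy<w<uy$ (possible because $u>x$ and $\Bbbk$ is densely ordered) and inspect the two triples $(1,u,w)$ and $(1,x,w)$ in $(\Bbbk^+)^3$. For $(1,u,w)$: the fact $u\in U_{12}$ gives membership in $C_{12}$, while $u^{-1}w<y$ places $u^{-1}w$ in $U_{23}$, giving membership in $C_{23}$; so $(1,u,w)\in C_{12}\cap C_{23}\subseteq C_{13}$ by Assumption~\eqref{CevaIneq}, forcing $w\in U_{13}$. For $(1,x,w)$: the fact $x\notin U_{12}$ rules out $C_{12}$, and $x^{-1}w>y$ rules out $C_{23}$; Assumption~\eqref{CevaIneq} then prevents the triple from lying in $C_{13}$, giving $w\notin U_{13}$, the desired contradiction.

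I do not expect a serious obstacle. Geometrically, the strict inequality $u>x$ opens up the nonempty interval $(xy,uy)$ of candidate third coordinates, and any $w$ inside it simultaneously places $(1,u,w)$ on the ``inside'' of both $C_{12}$ and $C_{23}$ and $(1,x,w)$ on the ``outside'' of both, which is exactly where the hypothesis $C_{12}\cap C_{23}\subseteq C_{13}\subseteq C_{12}\cup C_{23}$ bites. The only delicate bookkeeping will be the two facts $x\notin U_{12}$ and the existence of $u\in U_{12}$ with $u>x$; both are immediate consequences of the definition of $\cO(\OK)$ and the assumed non-initiality of $U_{12}$.
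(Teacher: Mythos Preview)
Your argument is correct and follows essentially the same strategy as the paper's proof: produce a point $x\notin U_{12}$ with some $u>x$ in~$U_{12}$, use the triple $(1,u,w)$ to force a suitable~$w$ into~$U_{13}$, then derive a contradiction from $(1,x,w)\in C_{13}\subseteq C_{12}\cup C_{23}$. The only differences are cosmetic: the paper takes~$x$ to be the left endpoint of the \emph{second} interval of~$U_{12}$ (rather than the right endpoint of the first) and uses the specific value $w=xy$, so that the contradiction in the~$C_{23}$ branch becomes $y\in U_{23}$ rather than your strict inequality $x^{-1}w>y$.
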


\begin{cproof}
(cf. Figure~\ref{Fig:U12init}).
Suppose that~$U_{12}$ is not initial.
{}From Assumptions~\eqref{0inU1223} and~\eqref{U1223notfull} it follows that the second leftmost interval of~$U_{12}$ has one of the forms $\oo{x,x'}$ or $\oc{x,x'}$ where $0<x<x'\leq\infty$ (and $x'=\infty$ in the second case).
Pick any $u\in\oo{x,x'}$\,; observe that $u\in U_{12}$\,, that is, $(1,u,xy)\in C_{12}$\,.
The element $v\eqdef u^{-1}xy$ belongs to $\oo{0,y}$, thus to~$U_{23}$\,; that is, $(1,u,xy)\in C_{23}$\,.
Using Assumption~\eqref{CevaIneq}, it follows that $(1,u,xy)\in C_{13}$\,, whence also $(1,x,xy)\in C_{13}$\,.
By Assumption~\eqref{CevaIneq} again, it follows that either $(1,x,xy)\in C_{12}$ or $(1,x,xy)\in C_{23}$\,.
In the first case, $x\in U_{12}$\,, a contradiction.
In the second case, $y\in U_{23}$\,, a contradiction.
\end{cproof}

{}From now on we shall write $U_{12}=\co{0,x}$ where $x\in\Bbbk^{++}$.

\begin{figure}[htb]
{\centering
\begin{tikzpicture}
\draw (0,0)--(6,0)--(3,6)--(0,0);
\draw (0,0)--(4,4);
\draw (0,0)--(5,2);
\draw (2,0)--(3,6);
\draw (4,0)--(3,6);
\draw (1.5,3)--(6,0);
\mycircle{(0,0)};
\mycircle{(2,0)};
\mycircle{(4,0)};
\mycircle{(6,0)};
\mycircle{(3,6)};
\mycircle{(1.5,3)};
\mycircle{(5,2)};
\mycircle{(4,4)};
\mycircle{(15/4,3/2)};
\mycircle{(12/5,12/5)};
\node[above] at (3,6) {$\seq{0,0,1}$};
\node at (-.8,0) {$\seq{1,0,0}$};
\node at (6.8,0) {$\seq{0,1,0}$};
\node[below] at (2,0) {$\seq{1,x,0}$};
\node[below] at (4,0) {$\seq{1,u,0}$};
\node at (5.8,2) {$\seq{0,1,v}$};
\node at (4.8,4) {$\seq{0,1,y}$};
\node[fill=white,below] at (2.05,2.1) {$\seq{1,x,xy}$};
\node[fill=white,below] at (3.9,1.2) {$\seq{1,u,xy}$};
\node at (.7,3) {$\seq{1,0,xy}$};
\end{tikzpicture}
}
\caption{Illustrating the proof of Proposition~\ref{P:LatCeva}, Claim~\ref{Cl:U12init}}
\label{Fig:U12init}
\end{figure}
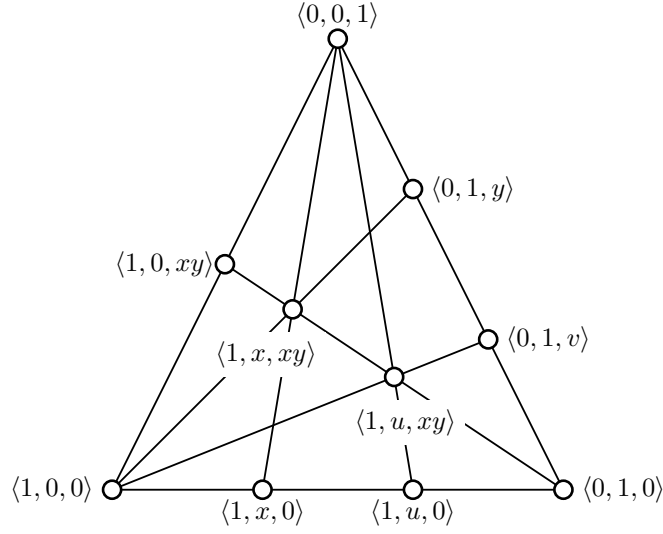

\begin{claim}\label{Cl:0xyciU13}
The set~$U_{13}$ contains $\co{0,xy}$.
\end{claim}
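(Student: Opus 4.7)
To establish Claim~\ref{Cl:0xyciU13}, I would aim to show that for every $z\in\co{0,xy}$, one can exhibit a ``witness'' point of the form $(1,b,z)$ lying in $C_{12}\cap C_{23}$; Assumption~\eqref{CevaIneq} then forces $(1,b,z)\in C_{13}$, which is precisely the statement $z\in U_{13}$. The case $z=0$ is immediate from Assumption~\eqref{0inU1223}, so the substantive content is the case $0<z<xy$.

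For such a $z$, the required conditions on the second coordinate $b$ are transparent: $(1,b,z)\in C_{12}$ amounts to $b\in U_{12}=\co{0,x}$, i.e.\ $0\le b<x$; and $(1,b,z)\in C_{23}$ amounts to $(b,z)\neq(0,0)$ and $b^{-1}z\in U_{23}=\co{0,y}$. Since we have arranged $z>0$, we need $b>0$ (so that $b^{-1}z$ is a genuine element of $\Bbbk^{++}$ rather than $\infty$), together with $b^{-1}z<y$, i.e.\ $b>zy^{-1}$. Thus it suffices to pick any $b\in\Bbbk$ with
\[
zy^{-1}<b<x.
\]
Such a $b$ exists precisely because $z<xy$ gives $zy^{-1}<x$, and a totally ordered division ring is order-dense (for any $u<v$ one has $u<(u+v)\cdot 2^{-1}<v$, since $1+1>0$).

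Having chosen such a $b$, the verifications are routine: $b\in\co{0,x}=U_{12}$ gives $(1,b,z)\in C_{12}$, while $b^{-1}z\in\co{0,y}=U_{23}$ gives $(1,b,z)\in C_{23}$. By Assumption~\eqref{CevaIneq}, $(1,b,z)\in C_{13}$, so $z=1^{-1}z\in U_{13}$, as desired.

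There is no real obstacle here; the claim is essentially a ``density'' argument, and the only subtlety is bookkeeping the degenerate case $z=0$ and the convention $0^{-1}z=\infty$, both of which are resolved by the trivial observation that $0\in U_{13}$ and by ensuring the chosen witness $b$ is strictly positive.
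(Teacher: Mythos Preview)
Your proof is correct and essentially identical to the paper's. The paper phrases the construction as a factorization $t=uv$ with $u\in\oo{0,x}$ and $v\in\oo{0,y}$, then considers the witness $(1,u,uv)$; this is the same as your choice of $b\in\oo{zy^{-1},x}$ with witness $(1,b,z)$, since $v=b^{-1}z$ then lies in $\oo{0,y}$. You are slightly more explicit about the order-density step, which the paper leaves implicit.
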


\begin{cproof}
(cf. Figure~\ref{Fig:0xyciU13}).
We need to prove that every element $t\in\co{0,xy}$ belongs to~$U_{13}$\,.
Let first $t=0$.
We need to prove that $(1,0,0)\in C_{13}$\,, which holds owing to Assumption~\eqref{0inU1223}.
Suppose from now on that $t>0$.
There are $u\in\oo{0,x}$ and $v\in\oo{0,y}$ such that $t=uv$.
Observe that $u\in U_{12}$ and $v\in U_{23}$\,.
It follows that $(1,u,uv)\in C_{12}\cap C_{23}$\,, thus, by Assumption~\eqref{CevaIneq}, $(1,u,uv)\in C_{13}$\,, that is, $t\in U_{13}$\,.
\end{cproof}

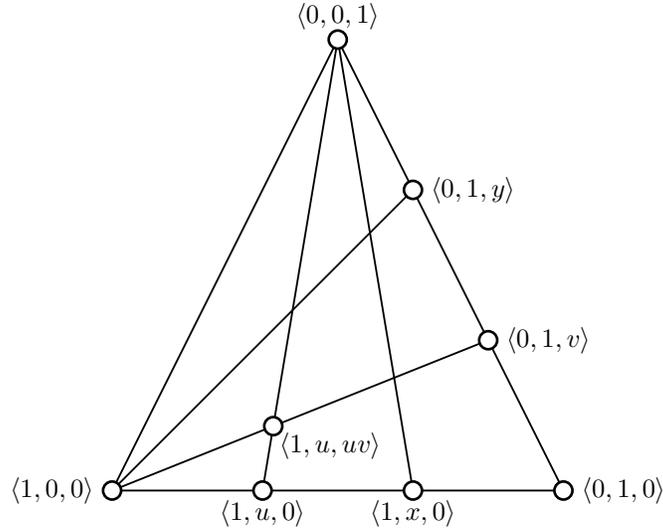
\begin{figure}[htb]
{\centering
\begin{tikzpicture}
\draw (0,0)--(6,0)--(3,6)--(0,0);
\draw (0,0)--(4,4);
\draw (0,0)--(5,2);
\draw (2,0)--(3,6);
\draw (4,0)--(3,6);
\mycircle{(0,0)};
\mycircle{(2,0)};
\mycircle{(4,0)};
\mycircle{(6,0)};
\mycircle{(3,6)};
\mycircle{(5,2)};
\mycircle{(4,4)};
\mycircle{(15/7,6/7)};
\node[above] at (3,6) {$\seq{0,0,1}$};
\node at (-.8,0) {$\seq{1,0,0}$};
\node at (6.8,0) {$\seq{0,1,0}$};
\node[below] at (2,0) {$\seq{1,u,0}$};
\node[below] at (4,0) {$\seq{1,x,0}$};
\node at (5.8,2) {$\seq{0,1,v}$};
\node at (4.8,4) {$\seq{0,1,y}$};
\node[below] at (2.9,.9) {$\seq{1,u,uv}$};
\end{tikzpicture}
}
\caption{Illustrating the proof of Proposition~\ref{P:LatCeva}, Claim~\ref{Cl:0xyciU13}}
\label{Fig:0xyciU13}
\end{figure}

\begin{claim}\label{Cl:U13=0xy}
$U_{13}=\co{0,xy}$.
\end{claim}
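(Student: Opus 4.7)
The plan is to establish the reverse inclusion $U_{13}\subseteq\co{0,xy}$, since Claim~\ref{Cl:0xyciU13} already supplies $\co{0,xy}\subseteq U_{13}$. I would argue by contradiction: assume there exists $t\in U_{13}$ with $t\geq xy$ (in $\OK$), and then exhibit a point of $(\Bbbk^+)^3$ that lies in $C_{13}$ but in neither $C_{12}$ nor $C_{23}$, violating Assumption~\eqref{CevaIneq}.

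For the finite case $xy\leq t<\infty$, I would test the triple $(1,x,t)\in(\Bbbk^+)^3$. Membership in $C_{13}$ is immediate from $1^{-1}t=t\in U_{13}$. On the other hand, the ratio $1^{-1}x=x$ equals the right endpoint of $U_{12}=\co{0,x}$ and so lies outside~$U_{12}$, giving $(1,x,t)\notin C_{12}$; and the ratio $x^{-1}t\geq y$ lies outside $U_{23}=\co{0,y}$, giving $(1,x,t)\notin C_{23}$. This contradicts $C_{13}\subseteq C_{12}\cup C_{23}$.

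The remaining possibility, $\infty\in U_{13}$, has to be handled separately because $\infty$ is not itself a coordinate in $(\Bbbk^+)^3$. Here I would use the point $(0,1,y)$: the first coordinate being zero forces $x_1^{-1}x_3=\infty\in U_{13}$, so $(0,1,y)\in C_{13}$; but $x_1^{-1}x_2=\infty\notin U_{12}=\co{0,x}$ rules out $C_{12}$, and $x_2^{-1}x_3=y\notin\co{0,y}=U_{23}$ rules out $C_{23}$, again contradicting Assumption~\eqref{CevaIneq}. Together with the finite case, this forces $U_{13}\subseteq\co{0,xy}$, which combined with Claim~\ref{Cl:0xyciU13} yields the desired equality. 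I expect no real obstacle beyond remembering to treat the boundary point $\infty\in U_{13}$ on its own, since the natural test triple $(1,x,t)$ degenerates there.
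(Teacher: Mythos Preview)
Your argument is correct and follows the same route as the paper: test the triple $(1,x,t)$ against~$C_{12}$ and~$C_{23}$ to derive a contradiction from any $t\in U_{13}$ with $t\geq xy$, then invoke Claim~\ref{Cl:0xyciU13}. The only difference is that the paper restricts at once to $z\in U_{13}\cap\co{xy,\infty}$ and does not single out the case $\infty\in U_{13}$; this is harmless because $U_{13}\in\cO(\OK)$, so $\infty\in U_{13}$ would force~$U_{13}$ to contain an interval $\oc{y_0,\infty}$ with $y_0\in\Bbbk^+$, and any finite $z>\max(y_0,xy)$ already falls under the finite case. Your separate treatment via the point $(0,1,y)$ is a valid alternative and makes the boundary case explicit.
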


\begin{cproof}
(cf. Figure~\ref{Fig:U13=0xy}).
Suppose that there exists $z\in U_{13}\cap\co{xy,\infty}$.
Then $(1,x,z)$ belongs to~$C_{13}$\,, thus, by Assumption~\eqref{CevaIneq}, either to~$C_{12}$ or to~$C_{23}$\,.
In the first case, $x\in U_{12}$\,, a contradiction.
In the second case, $x^{-1}z\in U_{23}$\,, thus $x^{-1}z<y$, a contradiction.
This completes the proof that $U_{13}\subseteq\co{0,xy}$.
Now apply Claim~\ref{Cl:0xyciU13}.
\end{cproof}

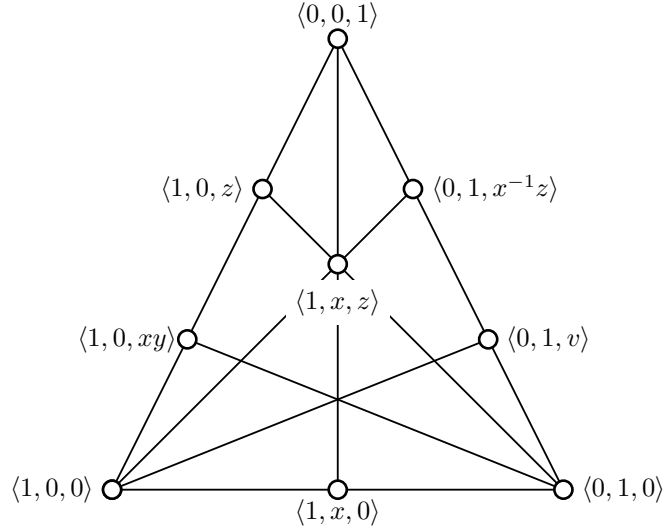
\begin{figure}[htb]
{\centering
\begin{tikzpicture}
\draw (0,0)--(6,0)--(3,6)--(0,0);
\draw (0,0)--(5,2);
\draw (0,0)--(4,4);
\draw (1,2)--(6,0);
\draw (2,4)--(6,0);
\draw (3,0)--(3,6);
\mycircle{(0,0)};
\mycircle{(0,0)};
\mycircle{(3,0)};
\mycircle{(6,0)};
\mycircle{(5,2)};
\mycircle{(4,4)};
\mycircle{(1,2)};
\mycircle{(2,4)};
\mycircle{(3,3)};
\mycircle{(3,6)};
\node[above] at (3,6) {$\seq{0,0,1}$};
\node at (-.8,0) {$\seq{1,0,0}$};
\node at (6.8,0) {$\seq{0,1,0}$};
\node[below] at (3,0) {$\seq{1,x,0}$};
\node at (5.8,2) {$\seq{0,1,v}$};
\node at (5.1,4) {$\seq{0,1,x^{-1}z}$};
\node at (.2,2) {$\seq{1,0,xy}$};
\node at (1.2,4) {$\seq{1,0,z}$};
\node[fill=white,below] at (3,2.8) {$\seq{1,x,z}$};
\end{tikzpicture}
}
\caption{Illustrating the proof of Proposition~\ref{P:LatCeva}, Claim~\ref{Cl:U13=0xy}}
\label{Fig:U13=0xy}
\end{figure}

The combination of Claims~\ref{Cl:U23init}--\ref{Cl:U13=0xy} entails the conclusion of Proposition~\ref{P:LatCeva}.
\end{proof}

\section{The non-commutative diagram~$\vec{A}$}\label{S:vecA}

In this section we shall introduce a non-com\-mu\-ta\-tive diagram (cf. Subsection~\ref{Su:NonCommDiag}), denoted by~$\vec{A}$, of Abelian \lgrp{s} and \lhom{s}, indexed by the cube
 \begin{equation}\label{Eq:Pow3}
 \Pow[3]=\set{\es,1,2,3,12,13,23,123}\,,\quad
 \text{endowed with set inclusion}
 \end{equation}
(where $[3]=\set{1,2,3}$, $12=21=\set{1,2}$, and so on).

We define~$A_{123}$ as the Abelian \lgrp\ defined by the generators~$a$, $a'$, $b$, $c$ subjected to the relations
 \[
 0\leq a\leq a'\leq 2a\,;\quad 0\leq b\,;\quad 0\leq c\,.
 \]
Further, we define the following \lsgrp{s} of~$A_{123}$\,:
\begin{itemize}
\item $A_{12}$ is the \lsgrp\ of~$A_{123}$ generated by $\set{a,b}$;

\item $A_{13}$ is the \lsgrp\ of~$A_{123}$ generated by $\set{a',c}$;

\item $A_{23}$ is the \lsgrp\ of~$A_{123}$ generated by $\set{b,c}$;

\item $A_{1}$ is the \lsgrp\ of~$A_{123}$ generated by $\set{a}$;

\item $A_{2}$ is the \lsgrp\ of~$A_{123}$ generated by $\set{b}$;

\item $A_{3}$ is the \lsgrp\ of~$A_{123}$ generated by $\set{c}$;

\item $A_{\es}=\set{0}$.
\end{itemize}
It is easy to see that each~$A_p$\,, for $p\in\Pow[3]$, can also be defined by generators and relations in a natural way; for example~$A_{12}$ is the Abelian \lgrp\ defined by the generators~$a$, $b$ subjected to the relations $0\leq a$ and $0\leq b$, and so on.
In particular, $A_1\cong A_2\cong A_3\cong\ZZ$.
The diagram~$\vec{A}$ is the $\Pow[3]$-indexed diagram of Abelian \lgrp{s}, whose vertices are the~$A_p$ where $p\in\Pow[3]$ and whose arrows are the loops at every vertex together with the following \lemb{s}:

\begin{itemize}
\item For every $p\in\Pow[3]$, $\vec{A}(\es,p)$ consists of the zero map~$\ga_\es^p$ from~$A_\es$ to~$A_p$\,.

\item For all distinct $i,j\in[3]$, $\vec{A}(i,ij)$ consists of the single map~$\ga_i^{ij}$, defined as the inclusion map from~$A_i$ into~$A_j$\,, \emph{except in case $i=1$ and $j=3$}, in which case~$\ga_i^{ij}=\ga_1^{13}$ is the unique \lhom\ sending~$a$ to~$a'$.
We emphasize this by marking the arrow~$\ga_1^{13}$ with a thick line on Figure~\ref{Fig:DiagA}.

\item For all distinct $i,j\in[3]$, $\vec{A}(ij,123)$ consists of the single map~$\ga_{ij}^{123}$, which is the inclusion map from~$A_{ij}$ into~$A_{123}$\,.

\item $\vec{A}(2,123)$ consists of the single map $\ga_2^{123}=\ga_{12}^{123}\circ\ga_2^{12}=\ga_{23}^{123}\circ\ga_2^{23}$, which is also the inclusion map from~$A_2$ into~$A_{123}$\,.

\item $\vec{A}(3,123)$ consists of the single map $\ga_3^{123}=\ga_{13}^{123}\circ\ga_3^{13}=\ga_{23}^{123}\circ\ga_3^{23}$, which is also the inclusion map from~$A_3$ into~$A_{123}$\,.

\item $\vec{A}(1,123)$ consists of the two \emph{distinct} maps $\ga_{12}^{123}\circ\ga_1^{12}$ (which is also the inclusion map) and~$\ga_{13}^{123}\circ\ga_1^{13}$ (which sends~$a$ to~$a'$) from~$A_1$ into~$A_{123}$\,.

\end{itemize}

The diagram~$\vec{A}$ is partly represented in Figure~\ref{Fig:DiagA}.
On that picture, each node is highlighted by its canonical generators: for example, $A_{123}$ is marked $A_{123}(a,a',b,c)$, $A_{13}$ is marked $A_{13}(a',c)$, and so on.

\begin{figure}[htb]
\begin{tikzcd}
\centering
& A_{123}(a,a',b,c) &&\\
&&&\\
A_{12}(a,b)\ar[uur, "\ga_{12}^{123}" description] &
A_{13}(a',c)\ar[uu, "\ga_{13}^{123}" description] &
A_{23}(b,c)\ar[uul, "\ga_{23}^{123}" description]&
\\
&&&\\
A_1(a)
\arrow[uu,"\ga_1^{12}" description]
\arrow[uur, near start, thick, "\boldsymbol{\ga_1^{13}}" description]
& A_2(b)
\arrow[uul, near start, "\ga_2^{12}" description,crossing over]
& A_3(c)
\arrow[uul, near start, "\ga_3^{13}" description]
\arrow[uu, "\ga_3^{23}" description]
\arrow[from=l,uu, near start, "\ga_2^{23}" description,crossing over]
&
\!\!\!\!\!\!\!\!\!
\text{Each }\ga_\es^{p}\text{ is the zero map }A_\es\to A_p\\
& A_{\es}=\set{0}\arrow[ul]\arrow[u]\arrow[ur] &&
\end{tikzcd}
\caption{The non-com\-mu\-ta\-tive diagram~$\vec{A}$}
\label{Fig:DiagA}
\end{figure}
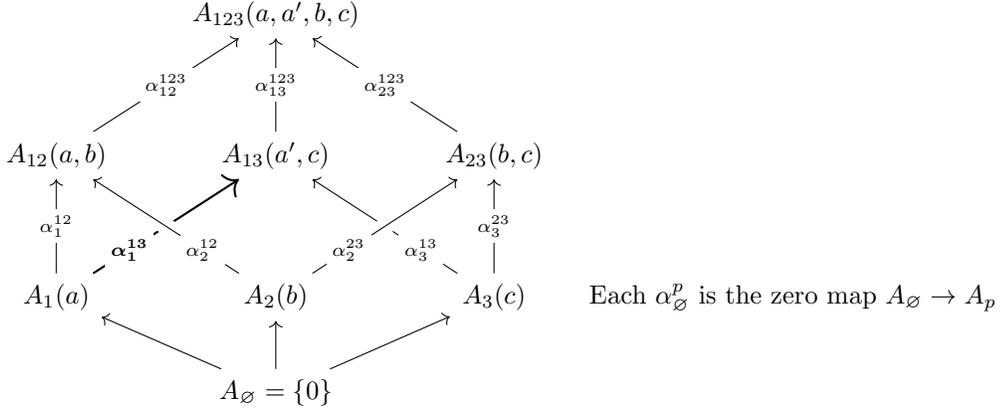

We emphasize that the diagram~$\vec{A}$ of Abelian \lgrp{s} is \emph{not} commutative (for $\vec{A}(1,123)$ has two elements).
However, it has the following remarkable property, involving the construction of the $I$-th power of a diagram (cf. Definition~\ref{D:D^I}), which we will fully bring to use in~\cite{NonElt}.

\begin{proposition}\label{P:vecAIdcComm}
For every set~$I$, the $\Pow[3]$-indexed diagram~$\Idc\vec{A}^I$ is a commutative diagram of completely normal distributive lattices with zero and closed $0$-lattice homomorphisms.
\end{proposition}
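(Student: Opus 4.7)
My approach is to reduce the statement to the commutativity of $\Idc\vec{A}$ itself and then extend to the $I$-th power by a uniform coordinatewise estimate. I would begin by observing that $\vec{A}$ is commutative except at $\vec{A}(1,123)$, which contains the two distinct morphisms $\ga_{12}^{123}\circ\ga_1^{12}$ (sending $a\mapsto a$) and $\ga_{13}^{123}\circ\ga_1^{13}$ (sending $a\mapsto a'$). The crucial point is that the defining relation $0\leq a\leq a'\leq 2a$ yields $a\asymp a'$ in $A_{123}^+$, hence $\seq{a}_{A_{123}}=\seq{a'}_{A_{123}}$. Since $A_1\cong\ZZ$ with generator~$a$, we have $\Idc A_1=\set{0,\seq{a}_{A_1}}$, and both morphisms of $\vec{A}(1,123)$ induce the \emph{same} map in $\Idc$: they send $0$ to $0$ and $\seq{a}_{A_1}$ to the common ideal $\seq{a}_{A_{123}}=\seq{a'}_{A_{123}}$. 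Therefore $\Idc\vec{A}(1,123)$ is a singleton, and so $\Idc\vec{A}$ is a commutative diagram.

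For the $I$-th power, let $p\leq q$ in $\Pow[3]^I$ and let $(f_i),(g_i)\in\prod_{i\in I}\vec{A}(p_i,q_i)$. They can differ only on the set $J\eqdef\setm{i\in I}{p_i=1,\ q_i=123}$. To see that $\Idc(\prod_i f_i)=\Idc(\prod_i g_i)$, I would evaluate on a typical principal ideal $\seq{(x_i)}$ of $\prod_i A_{p_i}$: for $i\in J$ we have $x_i=n_i\cdot a\in A_1\cong\ZZ$, with $\set{f_i(x_i),g_i(x_i)}=\set{n_i\cdot a,\,n_i\cdot a'}\subseteq A_{123}$, while for $i\notin J$ the two values agree. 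The uniform bounds $a\leq a'\leq 2a$ give $|f_i(x_i)|\leq|g_i(x_i)|\leq 2|f_i(x_i)|$ (or symmetrically) at every coordinate, so any coordinatewise inequality $|z_i|\leq n|f_i(x_i)|$ implies $|z_i|\leq 2n|g_i(x_i)|$ with the same multiplier $2n$, and conversely. Hence $\seq{(f_i(x_i))}_{\prod A_{q_i}}=\seq{(g_i(x_i))}_{\prod A_{q_i}}$. Since in the Abelian case every element of $\Idc(\prod_i A_{p_i})$ is a finite join of principal ideals (indeed, principal), the two morphisms coincide on all of $\Idc(\prod_i A_{p_i})$. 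The codomain properties---being a completely normal distributive lattice with zero, with closed $0$-lattice homomorphisms---are inherited directly from the functoriality of $\Idc$ recalled in Section~\ref{S:Basic}.

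The main obstacle, I expect, is precisely the infinite cardinality of~$I$: $(f_i)$ and $(g_i)$ may disagree on infinitely many coordinates. This is exactly where the \emph{uniform} bound $a'\leq 2a$ is essential, since a single integer multiplier must work across all coordinates of the product; the weaker fact that $a\propto a'$ holds with coordinate-dependent multipliers would be useless for establishing membership in the principal ideal generated by a single element of the infinite product.
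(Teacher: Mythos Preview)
Your proposal is correct and follows essentially the same approach as the paper: both hinge on the uniform estimate $a\leq a'\leq 2a$, which the paper packages as a relation $f\asymp_2 g$ (meaning $f(x)\leq 2g(x)$ and $g(x)\leq 2f(x)$ for all~$x$) holding between any two arrows in $\vec{A}(p,q)$, and then observes that~$\asymp_2$ is preserved under arbitrary products, forcing $\Idc(\prod_i f_i)=\Idc(\prod_i g_i)$. Your explicit identification of the set~$J$ where the coordinate maps may differ and your emphasis on why the \emph{uniform} multiplier~$2$ is indispensable for infinite~$I$ match exactly the paper's reasoning; the only cosmetic difference is that the paper avoids singling out~$J$ by noting that $f\asymp_2 g$ trivially holds when $f=g$.
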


\begin{proof}
Although~$\vec{A}$ is not a commutative diagram, it only barely fails to be so since its only non-com\-mu\-ta\-tive square is $(1,12,13,123)$, and then the inequalities $a\leq a'\leq 2a$ yield the statement
 \[
 (\ga_{12}^{123}\circ\ga_1^{12})(x)\leq
 (\ga_{13}^{123}\circ\ga_1^{13})(x)\leq
 2(\ga_{12}^{123}\circ\ga_1^{12})(x)
 \quad\text{whenever }x\in A_1\,.
 \]
It follows that for all $p\leq q$ in~$\Pow[3]$ and all $f,g\colon A_p\to A_q$ in~$\vec{A}$, the statement
 \[
 f(x)\leq 2g(x)\text{ and }g(x)\leq 2f(x)\,,\quad
 \text{for every }x\in A_p\,,
 \]
which we shall denote by $f\asymp_2g$, holds.
It follows easily that if $p_i\leq q_i$ in~$\Pow[3]$ and $f_i\,,g_i\colon A_{p_i}\to A_{q_i}$ whenever $i\in I$, then $\prod_{i\in I}f_i\asymp_2\prod_{i\in I}g_i$\,.
Hence, $\Idc\pI{\prod_{i\in I}f_i}=\Idc\pI{\prod_{i\in I}g_i}$, so there is exactly one arrow from $\prod_{i\in I}A_{p_i}$ to $\prod_{i\in I}A_{q_i}$ in $\vec{A}^I$.

Finally, every $\vec{A}^I(p)$, for $p\in P^I$, is an Abelian \lgrp, thus  $\Idc\vec{A}^I(p)$ is a completely normal distributive lattice with zero.
Whenever $p\leq q$ in~$P^I$, every member of $\vec{A}^I(p,q)$ is an \lhom, thus, by \cite[Proposition~2.6]{MV1}, every member of $\Idc\vec{A}^I(p,q)$ is a closed $0$-lattice homomorphism.
\end{proof}

In the present paper we will only need the case where~$I$ is a singleton:

\begin{corollary}\label{C:vecAIdcComm}
The diagram $\vec{\bA}\eqdef\Idc\vec{A}$ is commutative diagram of completely normal distributive lattices with zero and closed $0$-lattice homomorphisms.
\end{corollary}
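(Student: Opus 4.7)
The plan is to deduce this as an immediate specialization of Proposition~\ref{P:vecAIdcComm}, by taking $I=\{*\}$ to be any one-element set. In that case the $I$-indexed product functor on Abelian \lgrp{s} is canonically naturally isomorphic to the identity, so the $\Pow[3]^I$-indexed diagram $\vec{A}^I$ is identified with~$\vec{A}$ itself. Under this identification, $\Idc\vec{A}^I = \Idc\vec{A} = \vec{\bA}$, and the three conclusions of Proposition~\ref{P:vecAIdcComm} (commutativity, completely normal distributive lattices with zero at each vertex, closed $0$-lattice homomorphisms as edges) transfer verbatim.

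Nothing obstructs this specialization; the essential content --- that two $\asymp_2$-related \lhom{s} from $A_p$ to $A_q$ induce the same map on $\Idc$ --- has already been absorbed into the proof of Proposition~\ref{P:vecAIdcComm}. For a reader who prefers an autonomous verification, the only non-trivial commutativity instance concerns the pair of parallel maps $f=\ga_{12}^{123}\circ\ga_1^{12}$ and $g=\ga_{13}^{123}\circ\ga_1^{13}$ from~$A_1$ to~$A_{123}$; the defining relations $a\leq a'\leq 2a$ in $A_{123}$ yield $f(x)\leq g(x)\leq 2f(x)$ for every $x\in A_1^+$, whence $\seql{f(x)}_{A_{123}}=\seql{g(x)}_{A_{123}}$ and therefore $\Idc(f)=\Idc(g)$. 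The remaining items are standard: $\Idc(A_p)$ is completely normal distributive with zero because $A_p$ is an Abelian \lgrp, and each $\Idc(\ga_p^q)$ is a closed $0$-lattice homomorphism by \cite[Proposition~2.6]{MV1}.
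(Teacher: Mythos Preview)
Your proposal is correct and matches the paper's own approach: the corollary is stated immediately after Proposition~\ref{P:vecAIdcComm} with the remark that only the singleton case of~$I$ is needed, which is exactly your specialization $I=\{*\}$. The additional autonomous verification you sketch simply reproduces, in this special case, the argument already given in the proof of Proposition~\ref{P:vecAIdcComm}.
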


For $p\leq q$ in~$\Pow[3]$, we shall denote by~$\bga_p^q$ the unique arrow from~$\bA_p$ to~$\bA_q$ in~$\vec{\bA}$.
For example, $\bga_1^{123}=\Idc(\ga_{12}^{123}\circ\ga_1^{12})=\Idc(\ga_{13}^{123}\circ\ga_1^{13})$.
The elements
 \begin{equation}\label{Eq:ba1ba2ba3}
 \ba_1\eqdef\seq{a}_{A_{123}}=\seq{a'}_{A_{123}}\,,
 \quad\ba_2\eqdef\seq{b}_{A_{123}}\,,
 \quad\ba_3\eqdef\seq{c}_{A_{123}}
 \end{equation}
all belong to $\bA_{123}$\,.

Our main technical lemma is the following.

\begin{lemma}\label{L:IdcANonRepr}
There is no family $\vecm{\bc_{ij}}{i\neq j\text{ in }[3]}$ of elements of $\bA_{123}$ satisfying the following statements:
\begin{enumerater}
\item\label{cijinrng}
Each $\bc_{ij}$ belongs to the range of~$\bga_{ij}^{123}$\,.

\item\label{cijsplits1}
$\ba_i\leq\ba_j\vee\bc_{ij}$ whenever $\set{i,j}$ is either $\set{1,2}$ or $\set{2,3}$.

\item\label{cijsplits2}
$\bc_{ij}\wedge\bc_{ji}=0$ whenever $\set{i,j}$ is either $\set{1,2}$ or $\set{2,3}$.

\item\label{cijCeva}
$\bc_{12}\wedge\bc_{23}\leq
\bc_{13}\leq\bc_{12}\vee\bc_{23}$\,.


\end{enumerater}
\end{lemma}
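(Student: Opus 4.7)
The plan is to derive a contradiction by translating the hypothesized family $(\bc_{ij})$ into a configuration of positive $\ell$-terms on $(\QQ^+)^2$, and then applying Proposition~\ref{P:LatCeva} twice with two different quotient parameters, exploiting the fact that $\vec{A}(1,123)$ has two elements.

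First, each of $A_{12}$, $A_{23}$, $A_{13}$ is the free Abelian \lgrp\ on two positive generators, because the defining relations of $A_{123}$ only link $a$ to $a'$ and impose no coupling between the two canonical generators of any of these $\ell$-subgroups. Hypothesis~\eqref{cijinrng} thus lets me pick positive \lgrp-theoretic terms $u_{12},u_{21}$ in $(a,b)$, $u_{23},u_{32}$ in $(b,c)$, and $u_{13},u_{31}$ in $(a',c)$ with $\bc_{ij}=\seql{u_{ij}}_{A_{123}}$. To each such $u_{ij}$ I attach the set $U_{ij}\in\cO(\OK)$ obtained by projecting the strict open polyhedral cone $\set{(s,t)\in(\QQ^+)^2:u_{ij}(s,t)>0}$ onto $\OK$ along $s^{-1}t$, and I write $W\in\cO(\OK)$ for the set associated to $u_{13}$. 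The key observation is that for $\gl\in[1,2]\cap\QQ$, evaluating $u_{13}$ at a homomorphism $\phi\colon A_{123}\to\RR$ with $\phi(a)=x_1$, $\phi(a')=\gl x_1$, $\phi(c)=x_3$ produces $u_{13}(\gl x_1,x_3)$, so the set attached to $\bc_{13}$ along the ratio $x_3/x_1$ is $\gl W$.

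Second, the hypotheses translate cleanly. Condition~\eqref{cijsplits1} gives $0\in U_{12}\cap U_{23}$ and $\infty\in U_{21}\cap U_{32}$; condition~\eqref{cijsplits2} gives $U_{12}\cap U_{21}=\es$ and $U_{23}\cap U_{32}=\es$; and condition~\eqref{cijCeva}, read off through a homomorphism as above with parameter $\gl\in[1,2]\cap\QQ$, yields the Ceva inclusions $C_{12}\cap C_{23}\subseteq C_{13}^{(\gl)}\subseteq C_{12}\cup C_{23}$ required in Proposition~\ref{P:LatCeva}, where $C_{12},C_{23},C_{13}^{(\gl)}$ are built from $U_{12}$, $U_{23}$, $\gl W$.

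Third, I verify the remaining hypotheses of Proposition~\ref{P:LatCeva}. For hypothesis~\eqref{U1223notfull}: if $U_{12}=\co{0,\infty}$, then $\set{u_{12}>0}$ would be $\set{\ga>0}$, forcing $\bc_{12}=\ba_1$; then $\bc_{12}\wedge\bc_{21}=0$ would put the open cone $\set{u_{21}>0}$ inside the coordinate axis $\set{\ga=0}$, hence empty, so $\bc_{21}=0$ and $\ba_2\leq\ba_1\vee\bc_{21}=\ba_1$---which fails on any \lhom\ $A_{123}\to\RR$ sending $a,a'$ to~$0$ and $b$ to a positive value. The symmetric argument rules out $U_{23}=\co{0,\infty}$. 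For hypothesis~\eqref{0inU1223}, $0\in U_{12}\cap U_{23}$ is already in hand; to obtain $0\in W$, pick any $t$ in the nonempty right-neighborhood of~$0$ contained in $U_{12}$, and observe that $(1,t,0)\in C_{12}\cap C_{23}$ (using $t\in U_{12}$ and $0\in U_{23}$), hence $(1,t,0)\in C_{13}^{(1)}$ by the Ceva lower bound, forcing $0\in W$. Finally, applying Proposition~\ref{P:LatCeva} to $\gl=1$ yields $U_{12}=\co{0,x}$, $U_{23}=\co{0,y}$, and $W=\co{0,xy}$ for some $x,y\in\QQ^{++}$; re-applying it to $\gl=2$ keeps $U_{12}$ and $U_{23}$ unchanged but forces $2W=\co{0,xy}$, whereas $2W=\co{0,2xy}$ by construction, yielding $xy=2xy$ and so the contradiction $xy=0$. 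The principal obstacle is the step $0\in W$: since the lemma deliberately omits any hypothesis of the form $\ba_1\leq\ba_3\vee\bc_{13}$, the Ceva lower bound in~\eqref{cijCeva} is the only thing able to supply this information, and it does so through the openness of $U_{12}$ at the right of~$0$.
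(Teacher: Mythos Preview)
Your argument is correct and follows the same overall strategy as the paper: translate the~$\bc_{ij}$ into strict open polyhedral cones via evaluation homomorphisms $A_{123}\to\QQ$, verify the hypotheses of Proposition~\ref{P:LatCeva}, and exploit the gap between~$a$ and~$a'$ to derive a contradiction. The packaging and the endgame differ slightly. The paper builds a natural transformation $\vec{\eta}\colon\vec{\bA}\to\vec{\bD}$ into an explicit diagram of lattices of polyhedral cones, applies Proposition~\ref{P:LatCeva} once (implicitly at $a'=a$), and then uses that~$\eta_{13}$ is an isomorphism (Baker--Beynon duality) to lift the conclusion $U_{13}=\co{0,\gl\mu}$ back to the relation $c_{13}\asymp(\gl\mu a'-c)^+$ inside~$A_{13}$; the contradiction is obtained by evaluating the resulting inequality $(\gl\mu a'-c)^+\propto(\gl a-b)^+\vee(\mu b-c)^+$ in~$A_{123}$ at the single point $(a,a',b,c)=(1,2,\gl,\gl\mu)$. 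You instead parametrize the evaluation by $\gl=a'/a\in[1,2]\cap\QQ$ from the outset and apply Proposition~\ref{P:LatCeva} at both $\gl=1$ and $\gl=2$, comparing the two resulting determinations $W=\co{0,xy}$ and $2W=\co{0,xy}$. Your route is a bit more direct---no natural-transformation scaffolding, no explicit lift back to~$A_{13}$---while the paper's route makes the role of Baker--Beynon duality explicit and invokes Proposition~\ref{P:LatCeva} only once.
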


\begin{proof}
We shall introduce a $\Pow[3]$-indexed commutative diagram~$\vec{\bD}$ of bounded distributive lattices with $0$-lattice homomorphisms.
We set $\bD_\es\eqdef\set{0}$, $\bD_1=\bD_2=\bD_3=\two\eqdef\set{0,1}$, and $\bD_p\eqdef\cO_k$ (cf. Subsection~\ref{Su:OpPolCones}) whenever $p\in\set{12,13,23,123}$ has~$k$ elements.
Each~$\bgd_p^p$ is the identity map on~$\bD_p$\,.
For $p<q$ in~$\Pow[3]$, the map $\bgd_p^q\colon\bD_p\to\bD_q$ is defined as follows:
\begin{itemize}
\item
If $p=\es$ we have no choice, namely $\bgd_\es^p=0$.

\item $\bgd_1^{12}(1)=\bgd_1^{13}(1)=\bgd_2^{23}(1)\eqdef
\bck{\vx_1>0}_2$\,.

\item $\bgd_2^{12}(1)=\bgd_3^{13}(1)=\bgd_3^{23}(1)\eqdef
\bck{\vx_2>0}_2$\,.

\item $\bgd_{ij}^{123}(X)\eqdef
\setm{(x_1,x_2,x_3)\in(\QQ^+)^3}{(x_i\,,x_j)\in X}$, whenever $1\leq i<j\leq 3$ and $X\in\cO_2$\,.

\item $\bgd_i^{123}(1)=\bck{\vx_i>0}_3$ whenever $i\in[3]$.
\end{itemize}

We represent the diagram~$\vec{\bD}$ in Figure~\ref{Fig:DiagD}.

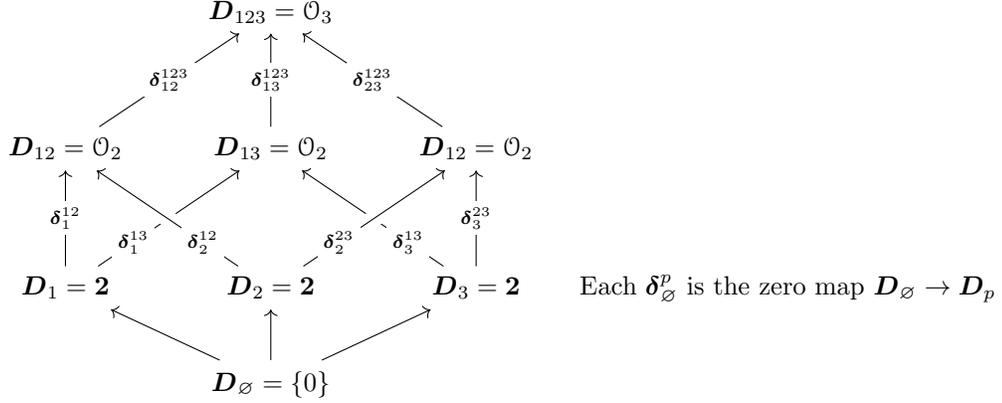
\begin{figure}[htb]
\begin{tikzcd}
& \bD_{123}=\cO_3 &&\\
&&&\\
\bD_{12}=\cO_2\arrow[uur, "\bgd_{12}^{123}" description] &
\bD_{13}=\cO_2\arrow[uu, "\bgd_{13}^{123}" description] &
\bD_{12}=\cO_2\arrow[uul, "\bgd_{23}^{123}" description]&
\\
&&&\\
\bD_1=\two
\arrow[uu,"\bgd_1^{12}" description]
\arrow[uur, near start, "\bgd_1^{13}" description]
& \bD_2=\two
\arrow[uul, near start, "\bgd_2^{12}" description,crossing over]
& \bD_3=\two
\arrow[uul, near start, "\bgd_3^{13}" description]
\arrow[uu, "\bgd_3^{23}" description]
\arrow[from=l,uu, near start, "\bgd_2^{23}" description,crossing over]
&
\!\!\!\!\!\!\!\!\!
\text{Each }\bgd_\es^{p}\text{ is the zero map }\bD_\es\to \bD_p\\
& \bD_{\es}=\set{0}\arrow[ul]\arrow[u]\arrow[ur] &&
\end{tikzcd}
\caption{The diagram~$\vec{\bD}$}\label{Fig:DiagD}
\end{figure}

The verification of the following claim is straightforward.

\setcounter{claim}{0}

\begin{claim}\label{Cl:vecDcomm}
$\vec{\bD}$ is a commutative diagram of bounded distributive lattices with $0$-lattice homomorphisms.
\end{claim}

Define $0$-lattice homomorphisms $\eta_p\colon\bA_p\to \bD_p$\,, for $p\in\Pow[3]$, as follows:
\begin{itemize}
\item Whenever $p\in\set{\es,1,2,3}$, $\eta_p$ is the unique isomorphism from~$\bA_p$ onto~$\bD_p$\,.

\item
Now we describe $\eta_p$ for $p\in\set{12,13,23}$\,:
\[
\eta_{12}\pI{\seq{t(a,b)}_{A_{12}}}=
\eta_{13}\pI{\seq{t(a',c)}_{A_{13}}}=
\eta_{23}\pI{\seq{t(b,c)}_{A_{23}}}
\eqdef\bck{t(\vx_1,\vx_2)\neq0}_2\,,
\]
for every binary $\ell$-term~$t$.

\item
We finally describe $\eta_{123}$\,:
 \[
 \eta_{123}\pI{\seq{t(a,a',b,c)}_{A_{123}}}\eqdef
 \bck{t(\vx_1,\vx_1,\vx_2,\vx_3)\neq0}_3\,,
 \]
for every $4$-ary $\ell$-term~$t$.
This makes sense because for all $x_1,x_2,x_3\in\QQ^+$, the quadruple $(x_1,x_1,x_2,x_3)$ satisfies the defining relations of~$A_{123}$\,.
\end{itemize}

\begin{claim}\label{Cl:etaNatTrans}
The family $\vec{\eta}\eqdef\vecm{\eta_p}{p\in\Pow[3]}$ is a natural transformation from~$\vec{\bA}$ to~$\vec{\bD}$.
Furthermore, $\eta_p$ is an isomorphism whenever $p\neq123$.
\end{claim}

\begin{cproof}
The statement about isomorphisms easily follows from the Baker-Beynon duality for finitely presented Abelian \lgrp{s} (cf. Baker~\cite{Baker1968}, Beynon~\cite{Beyn1975}).

Now in order to verify that~$\vec{\eta}$ is a natural transformation, it suffices to prove that $\eta_q\circ\bga_p^q=\bgd_p^q\circ\eta_p$ whenever~$p$ is a lower cover of~$q$ in~$\Pow[3]$.
This is trivial if $p=\es$, in which case both composed maps are zero.
If $p=1$ and $q=13$, we compute
 \begin{align*}
 (\eta_{13}\circ\bga_1^{13})\pI{\seq{a}_{A_{1}}}
 &=\eta_{13}\pI{\seq{a'}_{A_{13}}}
 =\bck{\vx_1>0}_2\,,\\
 (\bgd_1^{13}\circ\eta_1)\pI{\seq{a}_{A_{1}}}&=\bgd_1^{13}(1)
 =\bck{\vx_1>0}_2\,,
 \end{align*}
so we are done in that case.
The other cases, where~$p$ has one element and~$q$ two elements, are handled similarly.

If $p=13$ and $q=123$, we compute, for every binary $\ell$-term~$t$,
 \begin{align*}
 (\eta_{123}\circ\bga_{13}^{123})\pI{\seq{t(a',c)}_{A_{13}}}&=
 \eta_{123}\pI{\seq{t(a',c)}_{A_{123}}}=
 \bck{t(\vx_1,\vx_3)\neq0}_3\,,
 \\
 (\bgd_{13}^{123}\circ\eta_{13})\pI{\seq{t(a',c)}_{A_{13}}}&=
 \bgd_{13}^{123}\pI{\bck{t(\vx_1,\vx_2)\neq0}_2}
 =\bck{t(\vx_1,\vx_3)\neq0}_3\,,
  \end{align*}
so we are done in that case.
The other cases, where $p\in\set{12,23}$ and $q=123$, are handled similarly.
\end{cproof}

Now we argue by contradiction, by supposing that the~$\bc_{ij}$ satisfy Conditions \eqref{cijinrng}--\eqref{cijCeva} from the statement of Lemma~\ref{L:IdcANonRepr}.
For $i\neq j$ in~$[3]$, it follows from Condition~\eqref{cijinrng} that $\bc_{ij}=\seq{c_{ij}}_{A_{123}}$ for some $c_{ij}\in A_{ij}^+$\,.
The set $C'_{ij}\eqdef\eta_{ij}\pI{\seq{c_{ij}}_{A_{ij}}}$ belongs to $\bD_{ij}=\cO_2$\,, thus it is determined by its intersection with the segment ($1$-simplex) $\setm{(x,y)\in(\QQ^+)^2}{x+y=1}$, which is a finite union of relatively open intervals of that segment.
Hence, there exists $U_{ij}\in\cO(\OQ)$ such that
 \begin{equation}\label{Eq:Uij2C'ij}
 C'_{ij}=\setm{(x,y)\in(\QQ^+)^2\setminus\set{(0,0)}}
 {x^{-1}y\in U_{ij}}\,.
 \end{equation}
Setting $C_{ij}\eqdef\eta_{123}(\bc_{ij})=\bgd_{ij}^{123}(C'_{ij})$, we get
 \begin{equation}\label{Eq:Uij2Cij}
 C_{ij}=\begin{cases}
 \setm{(x_1,x_2,x_3)\in(\QQ^+)^3}
 {(x_i,x_j)\neq(0,0)\text{ and }
 x_i^{-1}x_j\in U_{ij}}&\text{if }i<j\,,\\
 \setm{(x_1,x_2,x_3)\in(\QQ^+)^3}
 {(x_i,x_j)\neq(0,0)\text{ and }
 x_j^{-1}x_i\in U_{ij}}&\text{if }i>j\,.
 \end{cases}
 \end{equation}
By applying the homomorphism~$\eta_{123}$ to Conditions \eqref{cijsplits1}--\eqref{cijCeva}, we thus obtain, setting $P_i\eqdef\bck{\vx_i>0}_3$\,, the following relations:
\begin{itemize}
\item[(C1)]
$P_i\subseteq P_j\cup C_{ij}$ whenever $\set{i,j}$ is either $\set{1,2}$ or $\set{2,3}$.

\item[(C2)]
$C_{ij}\cap C_{ji}=\es$ whenever $\set{i,j}$ is either $\set{1,2}$ or $\set{2,3}$.

\item[(C3)]
$C_{12}\cap C_{23}\subseteq
C_{13}\subseteq C_{12}\cup C_{23}$\,.

\end{itemize}
By~(C1) and since $(1,0,0)\in P_1\setminus P_2$\,, we get $(1,0,0)\in C_{12}$\,, that is (cf.~\eqref{Eq:Uij2Cij}), $0\in U_{12}$\,.
Similar arguments yield the relations $0\in U_{23}$ and $0\in U_{13}$\,.
Similarly, since $(0,1,0)\in P_2\setminus P_1$ and by~(C1), we get  $(0,1,0)\in C_{21}$\,, that is (cf.~\eqref{Eq:Uij2Cij}), $\infty\in U_{21}$\,.
Since $U_{21}\in\cO(\OQ)$, it follows that~$U_{21}$ contains an interval of the form $[z,\infty]$.
{}From~(C2) it follows that $U_{12}\cap U_{21}=\es$, thus~$U_{12}$ is a bounded subset of~$\QQ^+$.
We thus have proved that \emph{$U_{12}$ is a bounded subset of~$\QQ^+$ containing~$0$ as an element}.
By a similar argument, \emph{$U_{23}$ is a bounded subset of~$\QQ^+$ containing~$0$ as an element}.
Therefore, the assumptions of Proposition~\ref{P:LatCeva} are satisfied, so there are $\gl,\mu\in\QQ^{++}$ such that
 \begin{equation}\label{Eq:UijAllInit}
 U_{12}=\co{0,\gl}\,,\quad U_{23}=\co{0,\mu}\,,\text{ and }
 U_{13}=\co{0,\gl\mu}\,.
 \end{equation}
{}From $U_{12}=\co{0,\gl}$ and~\eqref{Eq:Uij2C'ij} it follows that
 \begin{align*}
 \eta_{12}\pI{\seq{c_{12}}_{A_{12}}}&=
 \setm{(x_1,x_2)\in(\QQ^+)^2\setminus\set{(0,0)}}
 {x_1^{-1}x_2<\gl}\\
 &=\bck{\gl\vx_1-\vx_2>0}\\
 &=\eta_{12}\pI{\seq{(\gl a-b)^+}_{A_{12}}}\,. 
 \end{align*}
Since~$\eta_{12}$ is an isomorphism (cf. Claim~\ref{Cl:etaNatTrans}), it follows that $\seq{c_{12}}_{A_{12}}=\seq{(\gl a-b)^+}_{A_{12}}$, that is,
 \begin{equation}\label{Eq:mu2c12}
 c_{12}\asymp(\gl a-b)^+\text{ within }A_{12}\,.
 \end{equation}
Similar arguments, using~\eqref{Eq:UijAllInit}, yield the relations
 \begin{equation}\label{Eq:glmu2c2313}
 c_{23}\asymp(\mu b-c)^+\text{ within }A_{23}\quad\text{and}\quad
 c_{13}\asymp(\gl\mu a'-c)^+\text{ within }A_{13}\,.
 \end{equation}
Condition~\eqref{cijCeva}, together with~\eqref{Eq:mu2c12} and~\eqref{Eq:glmu2c2313}, thus yields
 \begin{equation}\label{Eq:cijpropto}
 (\gl a-b)^+\wedge(\mu b-c)^+\propto(\gl\mu a'-c)^+\propto
 (\gl a-b)^+\vee(\mu b-c)^+\text{ within }A_{123}\,.
 \end{equation}
Since the quadruple $(1,2,\gl,\gl\mu)$ of rational numbers satisfies the defining relations of~$A_{123}$\,, there exists a unique \lhom\ $f\colon A_{123}\to\QQ$ sending $(a,a',b,c)$ to $(1,2,\gl,\gl\mu)$.
By applying~$f$ to the right hand side inequality of~\eqref{Eq:cijpropto}, we obtain that $\gl\mu=(2\gl\mu-\gl\mu)^+\propto0$, a contradiction.
\end{proof}

\section{Cevian operations}
\label{S:CevaDiff}

In this section we shall define \emph{Cevian operations} on certain distributive lattices with zero.
The existence of a Cevian operation is a strong form of complete normality.
It will turn out that such operations exist on all lattices of the form~$\Csc{G}$ (cf. Proposition~\ref{P:CevaSclgrp}) or~$\Idc{G}$ where the \lgrp~$G$ is representable (cf. Proposition~\ref{P:CevaIdRepr}).

\begin{definition}\label{D:CevaDiff}
Let~$D$ be a distributive lattice with zero.
A binary operation~$\sd$ on~$D$ is \emph{Cevian} if the following conditions hold:
\begin{itemize}
\item[(Cev1)]
$x\leq y\vee(x\sd y)$ for all $x,y\in D$;

\item[(Cev2)]
$(x\sd y)\wedge(y\sd x)=0$ for all $x,y\in D$;

\item[(Cev3)]
$x\sd z\leq(x\sd y)\vee(y\sd z)$ for all $x,y,z\in D$.
\end{itemize}
A distributive lattice with zero is \emph{Cevian} if it has a Cevian operation.
\end{definition}

Obviously, every Cevian lattice is completely normal.
The main example of Wehrung \cite[\S~6]{MVRS} shows that the converse fails at cardinality~$\aleph_2$\,.
We will shortly see that there is no such
counterexample in the countable case (cf. Corollary~\ref{C:CevaSclgrp}).
We will also find a new completely normal non-Cevian example of cardinality~$\aleph_2$\,, with additional features, in Theorem~\ref{T:Main}.

\begin{lemma}\label{L:Cev4}
Let~$\sd$ be a binary operation on a distributive lattice~$D$ with zero, satisfying both~\textup{(Cev2)} and~\textup{(Cev3)}.
Then $(x\sd y)\wedge(y\sd z)\leq x\sd z$ for all $x,y,z\in D$.
\end{lemma}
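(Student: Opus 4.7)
The plan is to combine two instances of (Cev3) with distributivity, and then use (Cev2) to kill one of the resulting terms. Set $p = x \sd y$, $q = y \sd z$, $r = x \sd z$, $s = z \sd y$, $t = y \sd x$, so the goal is to show $p \wedge q \leq r$.

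First, I would apply (Cev3) to the triple $(x,z,y)$, obtaining $p = x \sd y \leq (x \sd z) \vee (z \sd y) = r \vee s$. Symmetrically, applying (Cev3) to the triple $(y,x,z)$ yields $q = y \sd z \leq (y \sd x) \vee (x \sd z) = t \vee r$. Taking the meet of these two inequalities and using distributivity gives
\[
p \wedge q \;\leq\; (r \vee s) \wedge (r \vee t) \;=\; r \vee (s \wedge t).
\]

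Now I would take the meet of both sides with $p \wedge q$ and distribute again:
\[
p \wedge q \;=\; \bigl[(p \wedge q) \wedge r\bigr] \,\vee\, \bigl[(p \wedge q) \wedge s \wedge t\bigr].
\]
The second joinand vanishes: since $t = y \sd x$ and $p = x \sd y$, instance (Cev2) gives $p \wedge t = 0$, hence $(p \wedge q) \wedge s \wedge t = 0$. Therefore $p \wedge q \leq r$, which is the desired inequality.

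There is no real obstacle here once one thinks to apply (Cev3) in the two ``rerouted'' forms above; the only mild subtlety is realizing that the direct use of (Cev3) as stated bounds $x \sd z$ from above (in terms of $p$ and $q$), whereas here we need upper bounds on $p$ and $q$ themselves, which is why we must substitute $z$ and $x$ respectively in the middle slot.
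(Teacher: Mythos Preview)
Your proof is correct and shares the paper's key idea: apply (Cev3) with the middle variable swapped so as to bound $p=x\sd y$ from above by $r\vee s$, then use (Cev2) to annihilate the unwanted term. The paper's argument is slightly more economical: after obtaining $p\leq r\vee s$ it simply meets both sides with~$q$ and uses distributivity to get $p\wedge q\leq(r\wedge q)\vee(s\wedge q)$, where $s\wedge q=(z\sd y)\wedge(y\sd z)=0$ by (Cev2); your second invocation of (Cev3), bounding~$q$ by $t\vee r$, is therefore not needed.
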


\begin{proof}
It follows from (Cev3) that
 \begin{equation}\label{Eq:xyleqxzy}
 x\sd y\leq(x\sd z)\vee(z\sd y)\,.
 \end{equation}
Further, it follows from (Cev2) that $(y\sd z)\wedge(z\sd y)=0$.
Therefore, meeting~\eqref{Eq:xyleqxzy} with~$y\sd z$ and using the distributivity of~$D$, we obtain
 \begin{equation*}
 (x\sd y)\wedge(y\sd z)\leq(x\sd z)\wedge(y\sd z)\leq x\sd z\,.
 \tag*{\qed}
 \end{equation*}
\renewcommand{\qed}{}
\end{proof}

\begin{lemma}\label{L:ProdHom}\hfill
\begin{enumerater}
\item
Any product of a family of Cevian lattices is Cevian.

\item
Any homomorphic image of a Cevian lattice is Cevian.

\item
Any ideal of a Cevian lattice is Cevian.

\end{enumerater}
\end{lemma}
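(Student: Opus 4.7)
The plan is to treat the three parts in turn, with the main subtlety located in part~(iii). For part~(i), if each~$D_i$ carries a Cevian operation~$\sd_i$, one equips $\prod_{i\in I}D_i$ with the coordinatewise operation $(x\sd y)_i\eqdef x_i\sd_iy_i$; each of (Cev1)--(Cev3) is a lattice-theoretic inequality and so holds in the product as soon as it holds in every factor.

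For part~(ii), let $f\colon D\twoheadrightarrow E$ be a surjective $0$-lattice homomorphism from a Cevian lattice $(D,\sd)$ onto~$E$. Fix \emph{any} set-theoretic section $s\colon E\to D$ of~$f$ and define $a\sd_Eb\eqdef f(s(a)\sd s(b))$ for all $a,b\in E$. Each of (Cev1)--(Cev3) for~$\sd_E$ at a triple $(a,b,c)$ is obtained by applying~$f$ to the corresponding relation at $(s(a),s(b),s(c))$, using only that~$f$ is a homomorphism of distributive lattices with zero; no coherence of~$s$ with the lattice structure is required. (Incidentally, any Cevian operation automatically satisfies $u\sd u=0$, by (Cev2).)

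The main obstacle lies in part~(iii): for an ideal~$I$ of a Cevian lattice $(D,\sd)$, the operation~$\sd$ need not take $I\times I$ into~$I$. I remedy this by replacing~$\sd$ with the modified operation
\[
x\sd^\prime y\eqdef x\wedge(x\sd y)\,,
\]
which satisfies $x\sd^\prime y\leq x$, so it sends $I\times I$ into~$I$. I then verify that~$\sd^\prime$ is itself Cevian on the whole of~$D$. For (Cev1), meeting both sides of $x\leq y\vee(x\sd y)$ with~$x$ and distributing yields the identity
\[
x=(x\wedge y)\vee\bigl(x\wedge(x\sd y)\bigr)=(x\wedge y)\vee(x\sd^\prime y)\,,
\]
whence $x\leq y\vee(x\sd^\prime y)$. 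For (Cev2), $(x\sd^\prime y)\wedge(y\sd^\prime x)\leq(x\sd y)\wedge(y\sd x)=0$. The crux is (Cev3) for~$\sd^\prime$: meeting (Cev3) for~$\sd$ with~$x$ gives
\[
x\wedge(x\sd z)\leq(x\sd^\prime y)\vee\bigl(x\wedge(y\sd z)\bigr)\,,
\]
and to finish one bounds $x\wedge(y\sd z)$ by using the above expansion of~$x$ to write
\[
x\wedge(y\sd z)=\bigl(x\wedge y\wedge(y\sd z)\bigr)\vee\bigl(x\sd^\prime y\bigr)\wedge(y\sd z)\leq(y\sd^\prime z)\vee(x\sd^\prime y)\,.
\]
Restricting~$\sd^\prime$ to the sublattice~$I$ then yields a Cevian operation on~$I$, completing the proof.
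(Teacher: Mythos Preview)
Your proof is correct and follows exactly the paper's approach in all three parts: coordinatewise operation for products, section-and-pushforward for quotients, and the normalized operation $x\sd' y\eqdef x\wedge(x\sd y)$ for ideals. The paper merely asserts that~$\sd'$ is Cevian, whereas you supply the verification of (Cev1)--(Cev3) explicitly; your argument for (Cev3), via the decomposition $x=(x\wedge y)\vee(x\sd' y)$, is a clean way to fill that gap.
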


\begin{proof}
\emph{Ad}~(1).
Let~$\sd_i$ be a Cevian operation on~$D_i$ for each $i\in I$.
On the product $D\eqdef\prod_{i\in I}D_i$\,, set $x\sd y\eqdef\vecm{x_i\sd_iy_i}{i\in I}$.

\emph{Ad}~(2).
Let $f\colon D\twoheadrightarrow E$ be a surjective lattice homomorphism and let~$\sd_D$ be a Cevian operation on~$D$.
Then~$E$ is also a distributive lattice with zero.
For each $x\in E$, pick a preimage~$\ol{x}$ of~$x$ under~$f$, and set $x\sd_Ey\eqdef f\pI{\ol{x}\sd_D\ol{y}}$ for all $x,y\in E$.
Then~$\sd_E$ is a Cevian operation on~$E$.

\emph{Ad}~(3).
Say that a Cevian operation~$\sd$ on~$D$ is \emph{normalized} if $x\sd y\leq x$ for all $x,y\in D$.
For every Cevian operation~$\sd$, the variant operation~$\sd'$ defined by
 \[
 x\sd'y\eqdef x\wedge(x\sd y)\,,\quad\text{for all }x,y\in D\,,
 \]
is a normalized Cevian operation on~$D$.
In particular, every ideal~$I$ of~$D$ is closed under~$\sd'$, thus~$\sd'$ defines, by restriction, a (normalized) Cevian operation on~$I$.
\end{proof}

For any elements~$x$ and~$y$ in an \lgrp~$G$, set $x\sd y\eqdef(x-y)^+=x-x\wedge y$; write $x\sd_Gy$ instead of $x\sd y$ if~$G$ needs to be specified.

\begin{lemma}\label{L:Cevalgrp}
The operation~$\sd_G$\,, defined on~$G$, satisfies the statements~\textup{(Cev2)} and~\textup{(Cev3)}, for any \pup{not necessarily Abelian} \lgrp~$G$.
\end{lemma}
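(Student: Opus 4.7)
The plan is to deduce both identities from elementary $\ell$-group facts that remain valid in the non-Abelian case. The proofs of (Cev2) and (Cev3) both hinge on standard manipulations of positive and negative parts, together with the telescoping identity $x-z=(x-y)+(y-z)$, valid in any group.

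For (Cev2), I would set $a\eqdef x-y$. In any (possibly non-Abelian) group the identity $-(x-y)=y-x$ holds, as $(x+(-y))+(y+(-x))=0$; therefore $y\sd x=(y-x)^+=(-a)^+=a^-$, where $a^-$ denotes the negative part. The standard orthogonality identity $a^+\wedge a^-=0$, valid in every $\ell$-group, then yields $(x\sd y)\wedge(y\sd x)=0$ at once.

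For (Cev3), I would write $a\eqdef x-y$ and $b\eqdef y-z$, so that the telescoping $x-z=a+b$ reduces the task to bounding $(a+b)^+$ by $a^+\vee b^+$. The plan is to combine the Riesz-type decomposition $a=a^+-a^-$ (and similarly for $b$) with the non-Abelian distributivity of~$+$ over~$\vee$ and~$\wedge$ on both sides: expanding $(a+b)^+$ and absorbing the negative parts by means of $a^+\wedge a^-=0=b^+\wedge b^-$ should produce the desired inequality. The order-preservation of addition on both sides of an $\ell$-group is the workhorse that keeps the non-commutative manipulation tractable.

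The main obstacle I anticipate is precisely (Cev3), carried out in the non-Abelian setting: expressions such as $-(u+v)=-v+(-u)$ and the sidedness of the distributive laws force one to track the order of arguments with care, making the algebraic bookkeeping substantially more delicate than the one-line derivation of (Cev2). I would therefore aim to isolate the identity $(a+b)^+\leq a^+\vee b^+$ as a self-contained $\ell$-group lemma, proved by a Riesz-type case analysis on the orthogonal components of $a$ and $b$, and then apply it to $a=x-y$, $b=y-z$ to conclude.
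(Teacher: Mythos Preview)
Your treatment of (Cev2) is fine and essentially equivalent to the paper's: the paper uses the form $x\sd y=x-x\wedge y$ and computes
\[
(x-x\wedge y)\wedge(y-x\wedge y)=x\wedge y-x\wedge y=0,
\]
while you use $a^{+}\wedge a^{-}=0$ with $a=x-y$; both are correct.

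Your plan for (Cev3), however, is aimed at a statement that is \emph{false}. You propose to reduce to the inequality $(a+b)^{+}\leq a^{+}\vee b^{+}$, but already in $G=\ZZ$ with $a=b=1$ this reads $2\leq 1$. Equivalently, with $x=2$, $y=1$, $z=0$ one gets $x\sd z=2$ while $(x\sd y)\vee(y\sd z)=1$. So no ``Riesz-type case analysis on the orthogonal components of~$a$ and~$b$'' can possibly yield the join bound you are after; the obstruction is not the non-commutativity bookkeeping, it is that the target inequality does not hold.

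What the paper actually proves (and all that is needed for the application to~$\Csc G$, where $\seq{u}\vee\seq{v}=\seq{u+v}$ for $u,v\geq 0$) is the \emph{additive} form
\[
x\sd z\leq(x\sd y)+(y\sd z).
\]
The argument is a two-line chain: from $x\sd y\geq x-y$ one gets $x\leq(x\sd y)+y$, and likewise $y\leq(y\sd z)+z$; composing gives $x\leq(x\sd y)+(y\sd z)+z$, hence $x-z\leq(x\sd y)+(y\sd z)$, and since the right-hand side is $\geq 0$ the positive part $(x-z)^{+}$ is bounded by it as well. All steps use only that left and right addition are isotone, so the non-Abelian setting causes no difficulty. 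You should replace your join target by this additive one.
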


\begin{proof}
(Cev2) is easy: $(x\sd y)\wedge(y\sd x)=(x-x\wedge y)\wedge(y-x\wedge y)=x\wedge y-x\wedge y=0$.
For the right hand side inequality of~(Cev3), observe that $x\leq(x\sd y)+y$ and $y\leq(y\sd z)+z$, thus $x\leq(x\sd y)+(y\sd z)+z$, and thus $x-z\leq(x\sd y)+(y\sd z)$.
Since $0\leq(x\sd y)+(y\sd z)$, it follows that $x\sd z=(x-z)^+\leq(x\sd y)+(y\sd z)$.
\end{proof}

\begin{proposition}\label{P:CevaSclgrp}
Let~$G$ be an \lgrp.
Then~$\Csc{G}$ is a Cevian lattice.
\end{proposition}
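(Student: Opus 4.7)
The plan is to define a Cevian operation on~$\Csc{G}$ by lifting the $\ell$-group-level operation $x\sd y=(x-y)^+$ from Lemma~\ref{L:Cevalgrp} via a choice of generators. Using the axiom of choice, I fix a function $g\colon\Csc{G}\to G^+$ such that $A=\seq{g(A)}_G$ for every $A\in\Csc{G}$, and set
 \[
 A\sd B\eqdef\seq{\pI{g(A)-g(B)}^+}_G\,,\quad\text{for all }A,B\in\Csc{G}\,.
 \]
The three Cevian axioms will then follow from standard $\ell$-group identities, together with the well-known facts that for $u,v\in G^+$ one has $\seq{u+v}_G=\seq{u}_G\vee\seq{v}_G$ and $\seq{u\wedge v}_G=\seq{u}_G\wedge\seq{v}_G$.

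Writing $a=g(A)$, $b=g(B)$, $c=g(C)$: for~\textup{(Cev1)}, I would use the identity $(a-b)^{+}+b=\pI{(a-b)\vee 0}+b=a\vee b$, valid in every \lgrp\ because right-translation is an $\ell$-automorphism, to conclude that $a\leq a\vee b=(a-b)^{+}+b$, whence $\seq{a}_G\leq\seq{(a-b)^{+}+b}_G=\seq{(a-b)^{+}}_G\vee\seq{b}_G$, that is, $A\leq(A\sd B)\vee B$. For~\textup{(Cev2)}, the universal $\ell$-group identity $y^{+}\wedge y^{-}=0$, applied to $y=a-b$ (so that $y^{-}=(b-a)^{+}$), yields $(a-b)^{+}\wedge(b-a)^{+}=0$ in~$G$, and hence $(A\sd B)\wedge(B\sd A)=\seq{0}_G=0$. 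For~\textup{(Cev3)}, the argument given in the proof of Lemma~\ref{L:Cevalgrp}, applied to~$G$, gives $(a-c)^{+}\leq(a-b)^{+}+(b-c)^{+}$, and the lattice-level identity then lifts this at once to $A\sd C\leq(A\sd B)\vee(B\sd C)$.

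The most delicate ingredient, and the main obstacle in the non-Abelian setting, is the auxiliary identity $\seq{u\wedge v}_G=\seq{u}_G\wedge\seq{v}_G$ required in the verification of~\textup{(Cev2)}. It relies on the fact that~$\Csc{G}$ is a sublattice of~$\Cs{G}$ (where meets are intersections) together with the standard $\ell$-group-theoretic observation that every positive element of $\seq{u}_G\cap\seq{v}_G$ lies in the convex $\ell$-subgroup generated by~$u\wedge v$. Once this identity and its companion $\seq{u+v}_G=\seq{u}_G\vee\seq{v}_G$ are granted, the proof reduces to a direct transcription, via the chosen generators, of the $\ell$-group identities recorded in and around Lemma~\ref{L:Cevalgrp}.
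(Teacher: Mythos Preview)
Your proposal is correct and follows essentially the same approach as the paper: choose a generator for each principal convex $\ell$-subgroup, define $A\sd B$ via $(g(A)-g(B))^+$, and verify (Cev1)--(Cev3) by pushing the $\ell$-group identities of Lemma~\ref{L:Cevalgrp} through the maps $u\mapsto\seq{u}_G$, using $\seq{u+v}_G=\seq{u}_G\vee\seq{v}_G$ and $\seq{u\wedge v}_G=\seq{u}_G\wedge\seq{v}_G$. The paper's proof differs only cosmetically (it derives (Cev1) from $x=(x\sd_Gy)+x\wedge y$ rather than from $(a-b)^++b=a\vee b$, and it cites Bigard \emph{et al.}\ \cite[Proposition~2.2.11]{BKW} for the meet identity you call the ``delicate ingredient'').
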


\begin{proof}
For any $\bx\in\Csc{G}$, pick $\gc(\bx)\in G^+$ such that $\bx=\seq{\gc(\bx)}$ and set
 \[
 \bx\sd\by\eqdef\seq{\gc(\bx)\sd_G\gc(\by)}\,,
 \quad\text{for all }\bx,\by\in\Csc{G}\,.
 \]
Let $\bx,\by,\bz\in\Csc{G}$ with respective images~$x$, $y$, $z$ under~$\gc$.
It follows from the equation $x=(x\sd_Gy)+x\wedge y$ that $\seq{x}\subseteq\seq{y}\vee\seq{x\sd_Gy}$, that is, $\bx\subseteq\by\vee(\bx\sd\by)$.

Using Lemma~\ref{L:Cevalgrp} together with Bigard \emph{et al.} \cite[Proposition~2.2.11]{BKW}, we get
 \[
 (\bx\sd\by)\wedge(\by\sd\bx)=\seq{x\sd_Gy}\cap\seq{y\sd_Gx}
 =\seq{(x\sd_Gy)\wedge(y\sd_Gx)}=0\,.
 \]
By using Lemma~\ref{L:Cevalgrp}, we also get
 \begin{equation*}
 \bx\sd\bz=\seq{x\sd_Gz}\subseteq
 \seq{x\sd_Gy}\vee\seq{y\sd_Gz}
 =(\bx\sd\by)\vee(\by\sd\bz)\,.\tag*{\qed}
 \end{equation*}
\renewcommand{\qed}{}
\end{proof}

The main result of the author's paper~\cite{MV1} states that every countable completely normal distributive lattice with zero is isomorphic to~$\Idc{G}$ for some Abelian \lgrp~$G$.
Consequently, by Proposition~\ref{P:CevaSclgrp}, we get:

\begin{corollary}\label{C:CevaSclgrp}
A countable distributive lattice with zero is Cevian if{f} it is completely normal.
\end{corollary}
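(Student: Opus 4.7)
The forward direction (Cevian implies completely normal) is immediate from the definition: given $x,y$ in a Cevian lattice~$D$, set $u\eqdef x\sd y$ and $v\eqdef y\sd x$, and observe that (Cev1) and (Cev2) directly give $x\leq y\vee u$, $y\leq x\vee v$, and $u\wedge v=0$.

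For the converse, my plan is to invoke the heavy machinery from the author's paper \cite{MV1}. Suppose $D$ is a countable completely normal distributive lattice with zero. The main result of \cite{MV1} furnishes an Abelian \lgrp~$G$ together with an isomorphism $D\cong\Idc{G}$. Since~$G$ is Abelian, the equality $\Cs{G}=\Id{G}$ (noted in Subsection~\ref{Su:Posets}) restricts to $\Csc{G}=\Idc{G}$, hence $D\cong\Csc{G}$.

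Now Proposition~\ref{P:CevaSclgrp} tells us that $\Csc{G}$ is Cevian. Being Cevian is preserved under isomorphism (or apply Lemma~\ref{L:ProdHom}(ii) to the isomorphism $\Csc{G}\twoheadrightarrow D$), so~$D$ itself is Cevian.

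There is no real obstacle here: the corollary is a one-line consequence once one has both (a)~the representation theorem of \cite{MV1} producing an Abelian \lgrp\ whose $\Idc$ recovers~$D$, and (b)~Proposition~\ref{P:CevaSclgrp} furnishing a Cevian operation on $\Csc{G}$. The only subtlety worth flagging is that Proposition~\ref{P:CevaSclgrp} is stated for $\Csc{G}$ rather than $\Idc{G}$, but in the Abelian case these two lattices coincide, which is exactly why the argument goes through (and why, in the non-Abelian or non-representable Abelian setting, one needs the much more delicate analysis carried out in the later sections of the paper).
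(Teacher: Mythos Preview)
Your proof is correct and follows exactly the paper's own argument: the forward implication is noted as obvious from~(Cev1) and~(Cev2), and the converse combines the representation theorem of~\cite{MV1} with Proposition~\ref{P:CevaSclgrp} via the identification $\Csc{G}=\Idc{G}$ for Abelian~$G$. One trivial slip in your closing commentary: every Abelian \lgrp\ is representable, so ``non-representable Abelian'' is vacuous; the later sections address the \emph{representable non-Abelian} case (Proposition~\ref{P:CevaIdRepr}).
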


\begin{remark}\label{Rk:CevaSclgrp}
The result of Proposition~\ref{P:CevaSclgrp} cannot be extended to~$\Idc{G}$ for arbitrary \lgrp{s}~$G$.
Indeed, we proved in R\r{u}\v{z}i\v{c}ka \emph{et al.} \cite[Theorem~6.3]{RTW} that
\emph{every countable distributive \jzs\ is isomorphic to~$\Idc{G}$ for some \lgrp~$G$}.
In particular, $\Idc{G}$ may fail to be a lattice, and even if it is a lattice, it may fail to be completely normal (consider a square with a new zero element added).
We will see shortly (cf. Proposition~\ref{P:CevaIdRepr}) that this kind of counterexample does not occur within the class of representable \lgrp{s}.

Incidentally, it follows from \cite[Corollary~3.9]{RTW} that \emph{not every distributive \jzs\ is isomorphic to~$\Idc{G}$ for an \lgrp~$G$.}
\end{remark}

Recall that an \lgrp\ is \emph{representable} if it is a subdirect product of totally ordered groups.
Equivalently (cf. Bigard \emph{et al} \cite[Proposition~4.2.9]{BKW}), $G$ satisfies the identity $(2\vx)\wedge(2\vy)=2(\vx\wedge\vy)$.

\begin{lemma}\label{L:Represxmeety}
Let~$x$, $y$, $u$, $v$ be elements in a representable \lgrp~$G$.
Then
 \begin{equation}\label{Eq:ux-uvy-v}
 (u+x-u)\wedge(v+y-v)\leq
 (u+x\wedge y-u)\vee(v+x\wedge y-v)\,.
 \end{equation}
\end{lemma}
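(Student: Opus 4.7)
The plan is to exploit representability by reducing the claim to the case where $G$ is totally ordered. Since, by definition, a representable \lgrp\ embeds as a subdirect product of totally ordered groups, and since the inequality \eqref{Eq:ux-uvy-v} can be rewritten as the equation
\[
(u+x-u)\wedge(v+y-v)\wedge\bigl((u+x\wedge y-u)\vee(v+x\wedge y-v)\bigr)=(u+x-u)\wedge(v+y-v)
\]
between \lgrp-terms, it is preserved and reflected componentwise through the subdirect embedding. Hence it suffices to prove \eqref{Eq:ux-uvy-v} under the additional assumption that~$G$ is totally ordered.

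In the totally ordered case, $x\wedge y\in\set{x,y}$. Observe that \eqref{Eq:ux-uvy-v} is invariant under the simultaneous swap $(x,u)\leftrightarrow(y,v)$, so we may assume $x\leq y$, whence $x\wedge y=x$. The right-hand side then reduces to $(u+x-u)\vee(v+x-v)$, while the left-hand side is $(u+x-u)\wedge(v+y-v)$, and each side equals one of its two arguments. I then distinguish two subcases. If $u+x-u\leq v+y-v$, the left-hand side equals $u+x-u$, which is trivially dominated by the right-hand side. If instead $v+y-v\leq u+x-u$, the left-hand side equals $v+y-v$; since conjugation by~$v$ is order-preserving in any \lgrp, the hypothesis $x\leq y$ gives $v+x-v\leq v+y-v\leq u+x-u$, so the right-hand side equals $u+x-u$, which again dominates the left-hand side.

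No real obstacle is expected; the whole argument hinges on the reduction to the totally ordered setting, after which representability is used only implicitly and the verification is a routine case split. The only point to handle carefully is the justification that the targeted inequality survives the passage through the subdirect embedding, which is immediate from its rewriting as a term equation.
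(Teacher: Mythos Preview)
Your proof is correct and follows essentially the same approach as the paper: reduce to the totally ordered case via the subdirect representation, use the symmetry to assume $x\leq y$, and then verify the inequality directly. The paper's finish is slightly slicker---once the right-hand side is $(u+x-u)\vee(v+x-v)$, it simply observes this lies above $u+x-u$, which in turn lies above the left-hand side---so your two-case split is unnecessary but harmless.
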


\begin{proof}
It suffices to consider the case where~$G$ is totally ordered; so, by symmetry, we may assume that $x\leq y$.
Then the right hand side of~\eqref{Eq:ux-uvy-v} is equal to\linebreak $(u+x-u)\vee(v+x-v)$, which lies above $u+x-u$, thus above $(u+x-u)\wedge(v+y-v)$.
\end{proof}

\begin{lemma}\label{L:lIdRepres}
Let~$G$ be a representable \lgrp\ and let $x,y\in G^+$.
Then $\seql{x}\cap\seql{y}=\seql{x\wedge y}$.
Consequently, $\Idc{G}$ is a distributive lattice.
\end{lemma}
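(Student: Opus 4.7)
The inclusion $\seql{x\wedge y}\subseteq\seql{x}\cap\seql{y}$ is immediate from $x\wedge y\leq x,y$ and the fact that $\seql{-}$ is isotone on~$G^+$. For the reverse inclusion, since $\seql{x}\cap\seql{y}$ is a convex $\ell$-subgroup, it suffices to show that every positive element~$z$ in it lies in $\seql{x\wedge y}$. By definition of $\seql{x}$ and $\seql{y}$, there are $u_1,\dots,u_n,v_1,\dots,v_m\in G$ with $z\leq\sum_{i=1}^n(u_i+x-u_i)$ and $z\leq\sum_{j=1}^m(v_j+y-v_j)$, hence $z$ is dominated by the meet of these two positive sums.

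The key auxiliary step is to establish, for any representable \lgrp\ and any positive elements $a_1,\dots,a_n,b_1,\dots,b_m$, the inequality
\[
\left(\sum_{i=1}^n a_i\right)\wedge\left(\sum_{j=1}^m b_j\right)\leq\sum_{i,j}(a_i\wedge b_j).
\]
I would first verify this in a totally ordered group, where $\wedge=\min$. Assuming WLOG that the left-hand sum is the smaller one, for each~$i$ one shows $a_i\leq\sum_j(a_i\wedge b_j)$ by a case split: if some $b_j\geq a_i$ then that summand already equals~$a_i$, while otherwise every $a_i\wedge b_j=b_j$ and $\sum_j b_j\geq\sum_i a_i\geq a_i$. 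Summing over~$i$ yields the claim. The inequality then transfers to any representable \lgrp, because such a~$G$ sub-$\ell$-embeds into a product of totally ordered groups, the positivity hypothesis and the $\ell$-operations are preserved componentwise, and an inequality in~$G$ can be checked in each totally ordered factor.

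Applying the auxiliary claim with $a_i\eqdef u_i+x-u_i$ and $b_j\eqdef v_j+y-v_j$, then bounding each meet $(u_i+x-u_i)\wedge(v_j+y-v_j)$ via Lemma~\ref{L:Represxmeety} by the join $(u_i+(x\wedge y)-u_i)\vee(v_j+(x\wedge y)-v_j)$, and finally using that for positive elements in an \lgrp\ the join is bounded by the sum, we conclude that~$z$ is dominated by a finite sum of conjugates of~$x\wedge y$. Therefore $z\in\seql{x\wedge y}$, proving the reverse inclusion.

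For the last sentence, $\Idc{G}$ is closed under joins since $\seql{a}\vee\seql{b}=\seql{|a|+|b|}$, and by the identity just proven (applied to $|a|$ and $|b|$) it is also closed under meets; hence $\Idc{G}$ is a sublattice of the distributive algebraic lattice $\Id{G}$ and is itself distributive. The main obstacle in the whole argument is the auxiliary sum-meet inequality: it is the one place where representability of~$G$ is essential, since in a non-representable \lgrp\ conjugates of $x\wedge y$ cannot be controlled by conjugates of~$x$ and~$y$ in the required uniform way.
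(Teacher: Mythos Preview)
Your proof is correct and follows the same outline as the paper's: reduce via a sum--meet inequality to bounding each term $(u_i+x-u_i)\wedge(v_j+y-v_j)$, then invoke Lemma~\ref{L:Represxmeety}. The one difference is how the sum--meet inequality is obtained. The paper simply cites the Riesz decomposition theorem \cite[Th\'eor\`eme~1.2.16]{BKW}: from $0\leq z\leq\sum_i a_i$ one writes $z=\sum_i z_i$ with $0\leq z_i\leq a_i$, then from $z_i\leq z\leq\sum_j b_j$ one writes $z_i=\sum_j z_{ij}$ with $0\leq z_{ij}\leq b_j$, whence $z_{ij}\leq a_i\wedge b_j$. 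This holds in \emph{every} \lgrp. You instead verify the inequality in totally ordered groups and transfer it via subdirect products, which is fine under the hypothesis but works only for representable~$G$.

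This means your closing diagnosis is misplaced: the auxiliary sum--meet inequality is \emph{not} where representability is essential. Representability enters solely through Lemma~\ref{L:Represxmeety}; that is the step which genuinely fails in a non-representable \lgrp, and it is also the step your own argument ultimately rests on.
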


\begin{proof}
We prove the nontrivial containment.
Any element of~$\seql{x}$ lies, in absolute value, below a finite sum of conjugates of~$x$; and similarly for~$\seql{y}$ and~$y$.
By Bigard \emph{et al.} \cite[Th\'eor\`eme~1.2.16]{BKW}, it thus suffices to prove that $(u+x-u)\wedge(v+y-v)$ belongs to $\seql{x\wedge y}$ whenever $u,v\in G$.
This follows immediately from Lemma~\ref{L:Represxmeety}.
\end{proof}

\begin{proposition}\label{P:CevaIdRepr}
Let~$G$ be a representable \lgrp.
Then~$\Idc{G}$ is naturally \pup{in the functorial sense} a homomorphic image of~$\Csc{G}$.
In particular, it is a Cevian lattice.
\end{proposition}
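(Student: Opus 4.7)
The strategy is to exhibit a natural surjective $0$-lattice homomorphism $\pi_G\colon\Csc{G}\twoheadrightarrow\Idc{G}$. Define $\pi_G(\bx)$ to be the $\ell$-ideal of $G$ generated by $\bx$; equivalently, $\pi_G(\seq{x}_G)=\seql{x}_G$ for $x\in G^+$. This generator-free description is automatically well-defined, and it is surjective since every element of~$\Idc{G}$ has the form $\seql{x}_G$. Preservation of~$0$ is trivial, and naturality with respect to an \lhom{} $f\colon G\to H$ is straightforward: both $\Idc f\circ\pi_G$ and $\pi_H\circ\Csc f$ send $\seq{x}_G$ to $\seql{f(x)}_H$.

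The substantive content is the verification that $\pi_G$ preserves binary joins and meets. For joins, I would use that $\seq{x}\vee\seq{y}$ in $\Csc{G}$ equals $\seq{|x|+|y|}$, since both sides are the convex \lsgrp{} generated by $\set{x,y}$, and likewise $\seql{x}\vee\seql{y}=\seql{|x|+|y|}$ in $\Idc{G}$; the preservation of joins follows at once. For meets, representability is essential: by passing to a subdirect decomposition into totally ordered groups (where $\wedge$ is just $\min$), the identity $(n\vx)\wedge(n\vy)=n(\vx\wedge\vy)$ holds in $G$ for every positive integer~$n$. This forces $\seq{x}\cap\seq{y}=\seq{x\wedge y}$ for $x,y\in G^+$, because any $z$ with $|z|\leq nx$ and $|z|\leq my$ satisfies $|z|\leq\max(n,m)(x\wedge y)$. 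Combining this with Lemma~\ref{L:lIdRepres}, which supplies $\seql{x}\cap\seql{y}=\seql{x\wedge y}$ in the representable setting (and incidentally certifies that $\Idc{G}$ is a genuine lattice here), yields $\pi_G(\seq{x}\wedge\seq{y})=\pi_G(\seq{x})\wedge\pi_G(\seq{y})$.

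With $\pi_G$ established as a natural surjective $0$-lattice homomorphism, the final ``in particular'' clause is a direct application of Proposition~\ref{P:CevaSclgrp} (which makes $\Csc{G}$ Cevian) together with Lemma~\ref{L:ProdHom}(2) (which propagates Cevianness across homomorphic images). The main obstacle I anticipate is the meet-preservation step at the level of $\Csc{G}$: one cannot simply quote Lemma~\ref{L:lIdRepres}, since that lemma concerns $\seql{\cdot}$ rather than $\seq{\cdot}$, so the representability identity $(n\vx)\wedge(n\vy)=n(\vx\wedge\vy)$ must be invoked again, directly, at the convex \lsgrp{} level.
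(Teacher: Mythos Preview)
Your proposal is correct and follows essentially the same route as the paper: define $\pi_G(\seq{x})=\seql{x}$, check it is a surjective $0$-lattice homomorphism (joins are easy; meets use Lemma~\ref{L:lIdRepres}), then invoke Proposition~\ref{P:CevaSclgrp} and Lemma~\ref{L:ProdHom}(2). The only point worth flagging is your ``main obstacle'': the identity $\seq{x}\cap\seq{y}=\seq{x\wedge y}$ in~$\Csc{G}$ actually holds in \emph{every} \lgrp, not just representable ones---this is \cite[Proposition~2.2.11]{BKW}, already used in the proof of Proposition~\ref{P:CevaSclgrp}---so you need not re-invoke the representability identity $(n\vx)\wedge(n\vy)=n(\vx\wedge\vy)$ at the $\Csc$ level, though your argument via that identity is of course still valid.
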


\begin{proof}
By Lemma~\ref{L:ProdHom} and Proposition~\ref{P:CevaSclgrp}, it suffices to prove that~$\Idc{G}$ is a homomorphic image of $\Csc{G}$.
By Lemma~\ref{L:lIdRepres}, the assignment $\seq{x}\mapsto\seql{x}$ defines a \mh\ from~$\Csc{G}$ onto~$\Idc{G}$, and this naturally in~$G$.
It is obviously a surjective \jh.
\end{proof}

\begin{remark}\label{Rk:CevaIdRepr}
We observed in \cite[Example~10.6]{MV1} that the class of all lattices of the form~$\Idc{G}$, with~$G$ an Abelian \lgrp, is not closed under homomorphic images.
Since every Abelian \lgrp\ is representable and by Proposition~\ref{P:CevaIdRepr}, it follows that \emph{not every Cevian distributive lattice with zero is isomorphic to~$\Idc{G}$ for some Abelian \lgrp~$G$}.
\end{remark}

The following result shows that the non-commutativity of the diagram~$\vec{A}$ can be read on the commutative diagram~$\Idc\vec{A}$.

\begin{theorem}\label{T:NotCongIdcvecG}
Let $\vec{G}=\vecm{G_p,\gc_p^q}{p\leq q\text{ in }\Pow[3]}$ be a $\Pow[3]$-indexed commutative diagram of \lgrp{s} and \lhom{s} and let $\vec{\eta}=\vecm{\eta_p}{p\in\Pow[3]}$ be a natural transformation from~$\Idc\vec{G}$ to~$\Idc\vec{A}$.
Then $\eta_i=0$ for some $i\in\set{1,2,3}$.
\end{theorem}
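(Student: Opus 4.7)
The plan is to argue by contradiction, constructing a family $\vecm{\bc_{ij}}{i\neq j\text{ in }[3]}$ in $\bA_{123}$ witnessing all four conditions of Lemma~\ref{L:IdcANonRepr}. So assume $\eta_i\neq0$ for every $i\in\set{1,2,3}$; since $\bA_i=\Idc{A_i}\cong\two$, each $\eta_i$ is then surjective. Pick $x_i\in G_i^+$ with $\eta_i(\seql{x_i}_{G_i})=1_{\bA_i}$ and set $\tilde{x}_i\eqdef\gc_i^{123}(x_i)$. The naturality of $\vec\eta$, combined with $\bga_i^{123}(1_{\bA_i})=\ba_i$---which holds unambiguously in spite of the two arrows of $\vec{A}(1,123)$, thanks to $\seq{a}_{A_{123}}=\seq{a'}_{A_{123}}=\ba_1$---yields $\eta_{123}(\seql{\tilde{x}_i}_{G_{123}})=\ba_i$ for each $i\in\set{1,2,3}$.

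Next, using the Cevian operation on $\Csc{G_{123}}$ from Proposition~\ref{P:CevaSclgrp}, set $u_{ij}\eqdef(\tilde{x}_i-\tilde{x}_j)^+\in G_{123}^+$. The commutativity of $\vec{G}$ forces $u_{ij}=\gc_{ij}^{123}(u'_{ij})$ with $u'_{ij}\eqdef(\gc_i^{ij}(x_i)-\gc_j^{ij}(x_j))^+\in G_{ij}^+$, and I define
 \[
 \bc_{ij}\eqdef\bga_{ij}^{123}\bigl(\eta_{ij}(\seql{u'_{ij}}_{G_{ij}})\bigr)=\eta_{123}(\seql{u_{ij}}_{G_{123}})\in\bA_{123}\,.
 \]
Condition~(i) of Lemma~\ref{L:IdcANonRepr} is immediate. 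Condition~(ii) follows by applying $\eta_{123}$ to the $\ell$-group inequality $\tilde{x}_i\leq u_{ij}+\tilde{x}_j$ in $G_{123}$ and using $\seql{p+q}_{G_{123}}=\seql{p}_{G_{123}}\vee\seql{q}_{G_{123}}$ for positive $p,q$. The right half of~(iv), $\bc_{13}\leq\bc_{12}\vee\bc_{23}$, comes similarly from the $\ell$-group Cevian inequality $u_{13}\leq u_{12}+u_{23}$ provided by Lemma~\ref{L:Cevalgrp}.

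The main obstacle is the verification of the meet-type conditions~(iii) and the left half of~(iv), since $\vec\eta$ is only a family of $(\vee,0)$-homomorphisms and need not preserve meets. To circumvent this, I exploit two facts: first, each $\bga_{ij}^{123}$ is an injective closed $0$-lattice homomorphism (by Corollary~\ref{C:vecAIdcComm}, together with the fact that the inclusions $A_{ij}\hookrightarrow A_{123}$ are $\ell$-group embeddings), so meets in $\bA_{123}$ of two elements of a common range $\bga_{ij}^{123}(\bA_{ij})$ reduce to meets in $\bA_{ij}$; second, every $A_p$ is Abelian, so by Lemma~\ref{L:lIdRepres} the meets of principal $\ell$-ideals in $\bA_{ij}$ and $\bA_{123}$ are computed by $\ell$-group meets of generators. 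Combining these with the orthogonality $u'_{ij}\wedge u'_{ji}=0$ in $G_{ij}$ and with Lemma~\ref{L:Cev4} applied to $\Csc{G_{123}}$---which gives an integer $n$ with $u_{12}\wedge u_{23}\leq nu_{13}$ in $G_{123}$---yields the required meet relations, possibly after refining each pair $(\bc_{ij},\bc_{ji})$ by complete normality of the completely normal distributive lattice $\bA_{ij}$ to enforce strict orthogonality. The subtle point is to perform these refinements consistently across the three meet conditions; once this is done, Lemma~\ref{L:IdcANonRepr} delivers the desired contradiction, proving that some $\eta_i$ must vanish.
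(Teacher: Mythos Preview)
Your construction of the elements~$\bc_{ij}$ and your verification of conditions~(i), (ii), and the right-hand inequality of~(iv) follow the paper's proof verbatim. The paper does exactly what you do up to that point, and then disposes of~(iii) and the left half of~(iv) in one line, by invoking Proposition~\ref{P:CevaSclgrp} and Lemma~\ref{L:Cev4}: the Cevian relations $(\bx\sd\by)\wedge(\by\sd\bx)=0$ and $(\bx\sd\by)\wedge(\by\sd\bz)\leq\bx\sd\bz$ hold in~$\Csc{G_{123}}$, hence their images under~$\eta_{123}$ give the required relations in~$\bA_{123}$. This only works if~$\eta_{123}$ respects meets, and the paper evidently reads ``natural transformation'' in the category of distributive lattices with $0$-lattice homomorphisms---the same category~$\cS$ used throughout Sections~\ref{S:CrashCondens} and~\ref{S:aleph2}. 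Under that reading the meet conditions are immediate and your worry evaporates.

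If you insist on the weaker reading where each~$\eta_p$ is merely a $(\vee,0)$-homomorphism, then your proposed repair is not a proof. Your first observation, that $\bga_{ij}^{123}$ is an injective $0$-lattice homomorphism, only relocates the difficulty from~$\bA_{123}$ to~$\bA_{ij}$: you still need $\eta_{ij}\pI{\seql{u'_{ij}}}\wedge\eta_{ij}\pI{\seql{u'_{ji}}}=0$, which again asks~$\eta_{ij}$ to preserve a meet. Your second observation, that meets in~$\bA_p$ are computed by meets of generators, concerns only the \emph{codomain} and says nothing about how~$\eta_p$ interacts with meets on the domain side. The final move---``refining each pair $(\bc_{ij},\bc_{ji})$ by complete normality\ldots\ to enforce strict orthogonality''---is where the real work would have to happen, and you leave it entirely unspecified. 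Any such modification risks destroying condition~(i) (the new elements need not lie in the range of~$\bga_{ij}^{123}$) and condition~(iv) (the triangle relation must survive \emph{three} simultaneous refinements); your own phrase ``the subtle point is to perform these refinements consistently'' names the obstacle without removing it.

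In short: either adopt the paper's reading that~$\eta_p$ are $0$-lattice homomorphisms, in which case your argument is complete and identical to the paper's, or else you owe an actual construction for the refinement step.
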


\begin{proof}
Suppose otherwise.
For each $i\in[3]$, there exists $c_i\in G_i^+$ such that $\eta_i(\seq{c_i}_{G_i})\neq0$.
Since $\Idc A_i\cong\set{0,1}$, it follows that
$\eta_i(\seq{c_i}_{G_i})=1$.
Set $b_i\eqdef\gc_i^{123}(\seq{c_i}_{G_i})$ and
 \[
 \bc_{ij}\eqdef\eta_{123}(\seq{b_i\sd_{G_{123}}b_j}_{G_{123}})
 \]
(an element of~$\Idc{A_{123}}$), for all distinct $i,j\in[3]$.
Hence, the element
 \begin{align*}
 \bc_{ij}&=(\eta_{123}\circ\Idc{\gc_{ij}^{123}})
 \pI{\seq{\gc_i^{ij}(g_i)\sd_{G_{ij}}\gc_j^{ij}(g_j)}_{G_{ij}}}\\
 &=(\bga_{ij}^{123}\circ\eta_{ij})
 \pI{\seq{\gc_i^{ij}(g_i)\sd_{G_{ij}}\gc_j^{ij}(g_j)}_{G_{ij}}}
 \end{align*}
belongs to the range of~$\bga_{ij}^{123}$.
Furthermore, for each $i\in[3]$,
 \begin{align*}
 \eta_{123}(\seq{b_i}_{G_{123}})&=
 (\eta_{123}\circ\Idc{\gc_i^{123}})(\seq{b_i}_{G_{i}})\\
 &=(\bga_i^{123}\circ\eta_i)(\seq{b_i}_{G_{i}})\\
 &=\bga_i^{123}(1)\\
 &=\ba_i
 \end{align*}
(we defined the~$\ba_i$ in~\eqref{Eq:ba1ba2ba3}).
By using (the argument of) Proposition~\ref{P:CevaSclgrp}, together with Lemma~\ref{L:Cev4}, it follows that the elements~$\bc_{ij}$\,, where $i\neq j$ in~$[3]$, satisfy Assumptions \eqref{cijsplits1}--\eqref{cijCeva} of the statement of Lemma~\ref{L:IdcANonRepr}; a contradiction.
\end{proof}

By using Proposition~\ref{P:CevaIdRepr}, we thus obtain

\begin{corollary}\label{C:NotCongIdcvecG}
There is no~$\Pow[3]$-indexed commutative diagram~$\vec{G}$ of \lgrp{s} \pup{resp., representable \lgrp{s}} and \lhom{s} such that $\Csc\vec{G}\cong\Idc\vec{A}$ \pup{resp., $\Idc\vec{G}\cong\Idc\vec{A}$}.
\end{corollary}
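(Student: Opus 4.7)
\emph{Plan.}
We derive both assertions from Theorem~\ref{T:NotCongIdcvecG}, invoking Proposition~\ref{P:CevaIdRepr} to ensure, in the representable case, that $\Idc\vec{G}$ is indeed a commutative diagram of Cevian distributive lattices (and not merely a diagram of join-semilattices, where $\Idc\vec{G}\cong\Idc\vec{A}$ might be a weaker statement). Suppose, for contradiction, that an isomorphism of commutative diagrams $\Phi$ of one of the two required forms exists. In either case each component $\Phi_p$ is a lattice isomorphism; since $A_i\cong\ZZ$ forces $\Idc A_i\cong\two\neq\set{0}$ for $i\in\set{1,2,3}$, no $\Phi_i$ can be the zero map.

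\emph{The representable $\Idc$ assertion.}
The isomorphism $\Phi\colon\Idc\vec{G}\cong\Idc\vec{A}$ is, in particular, a natural transformation $\vec\eta\eqdef\vecm{\Phi_p}{p\in\Pow[3]}$ between these $\Pow[3]$-indexed diagrams, to which Theorem~\ref{T:NotCongIdcvecG} applies. The theorem forces some $\eta_i=0$ with $i\in\set{1,2,3}$, contradicting the previous paragraph.

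\emph{The general $\Csc$ assertion.}
Here I would invoke the evident $\Csc$-counterpart of Theorem~\ref{T:NotCongIdcvecG}: for every $\Pow[3]$-indexed commutative diagram $\vec{G}$ of \lgrp{s}, any natural transformation $\vec\eta\colon\Csc\vec{G}\to\Idc\vec{A}$ must satisfy $\eta_i=0$ for some $i\in\set{1,2,3}$. The proof transfers verbatim from that of Theorem~\ref{T:NotCongIdcvecG}: its only structural input from the source diagram is the explicit Cevian operation $\seq{x}\sd\seq{y}\eqdef\seq{(x-y)^+}$ on $\Csc G_{123}$, which Proposition~\ref{P:CevaSclgrp} supplies for every \lgrp. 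Applied to the natural transformation induced by $\Phi\colon\Csc\vec{G}\cong\Idc\vec{A}$, this $\Csc$-variant again forces some $\eta_i=0$, contradicting the first paragraph.

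\emph{Main obstacle.}
The only delicate point is precisely the verification that the argument of Theorem~\ref{T:NotCongIdcvecG} is indifferent to the replacement of $\Idc$ by $\Csc$ on the source side. This is essentially bookkeeping, since every step of that proof---choice of generators $c_i\in G_i^+$, formation of $b_i\eqdef\gc_i^{123}(c_i)\in G_{123}$, definition of $\bc_{ij}$ via the element $(b_i-b_j)^+\in G_{123}$, and final extraction of the forbidden Ceva-configuration of Lemma~\ref{L:IdcANonRepr}---uses only operations inside the \lgrp{s} $G_p$ and the functoriality of $\Csc$, never a property specific to $\Idc$.
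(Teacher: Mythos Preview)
Your proposal is correct and follows essentially the same route as the paper's one-line proof (``By using Proposition~\ref{P:CevaIdRepr}, we thus obtain''). You make explicit what the paper leaves implicit: the argument of Theorem~\ref{T:NotCongIdcvecG} is really a $\Csc$-argument (it uses $\seq{\,\cdot\,}_{G_{123}}$ and Proposition~\ref{P:CevaSclgrp}), so the $\Csc$ case for arbitrary \lgrp{s} is immediate, while Proposition~\ref{P:CevaIdRepr} supplies the Cevian structure on $\Idc G_{123}$ needed to run the same argument in the representable $\Idc$ case; your observation that the isomorphism cannot vanish at level~$i$ since $\Idc A_i\cong\two$ is exactly the missing sentence that turns Theorem~\ref{T:NotCongIdcvecG} into the corollary.
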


\section[Condensates]{A crash course on $P$-scaled Boolean algebras and condensates}
\label{S:CrashCondens}

In order to turn the diagram counterexample (Lemma~\ref{L:IdcANonRepr}) to an object counterexample (Theorem~\ref{T:Main}), we will need to apply a complex, technical result of category theory called the \emph{Armature Lemma}, introduced in Gillibert and Wehrung \cite[Lemma~3.2.2]{Larder}.
In order to help the reader get a feel of the machinery underlying that tool, we shall devote this section to giving a flavor of that machinery.

\subsection{$P$-scaled Boolean algebras, normal morphisms, $\two[p]$}
\label{Su:BoolP}
For an arbitrary poset~$P$, a \emph{$P$-scaled Boolean algebra} is a structure
 \[
 \bA=\pI{A,\vecm{A^{(p)}}{p\in P}}\,,
 \]
where~$A$ is a Boolean algebra, every~$A^{(p)}$ is an ideal of~$A$, $A=\bigvee\vecm{A^{(p)}}{p\in P}$ within the ideal lattice of~$A$, and $A^{(p)}\cap A^{(q)}=\bigvee\vecm{A^{(r)}}{r\geq p,q}$ whenever $p,q\in P$.
For $P$-scaled Boolean algebras~$\bA$ and~$\bB$, a \emph{morphism} from~$\bA$ to~$\bB$ is a homomorphism $f\colon A\to B$ of Boolean algebras such that $f[A^{(p)}]\subseteq B^{(p)}$ for every $p\in P$.
If~$f$ is surjective and $f[A^{(p)}]=B^{(p)}$ for every~$p$, we say that~$f$ is \emph{normal}.
The category of all $P$-scaled Boolean algebras is denoted by~$\Bool_P$\,.
It has all small directed colimits and all finite products.

We prove in \cite[Corollary~4.2.7]{Larder} that a $P$-scaled Boolean algebra~$\bA$ is finitely presented (in the sense of Gabriel and Ulmer~\cite{GabUlm},  Ad{\'a}mek and Rosick{\'y}~\cite{AdRo94}) within~$\Bool_P$ if{f}~$A$ is finite and for every atom~$a$ of~$A$, the ideal $\|a\|_{\bA}\eqdef\setm{p\in P}{a\in A^{(p)}}$ has a largest element, then denoted by~$|a|_{\bA}$\,.
Finitely presented members of~$\Bool_P$ approximate well the whole class:
\begin{itemize}
\item[---] (cf. \cite[Proposition~2.4.6]{Larder}) Every member of~$\Bool_P$ is a monomorphic directed colimit of a direct system of finitely presented members of~$\Bool_P$\,.

\item[---] (cf. \cite[Proposition~2.5.5]{Larder}) Every normal morphism in~$\Bool_P$ is a directed colimit, within the category $\Bool_P^\two$ of all arrows of~$\Bool_P$\,, of a direct system of normal morphisms between finitely presented members of~$\Bool_P$\,.
\end{itemize}
For every $p\in P$, we introduced in \cite[Definition~2.6.1]{Larder} the $P$-scaled Boolean algebra
 \[
 \two[p]\eqdef\pI{\two,\vecm{\two[p]^{(q)}}{q\in P}}
 \]
where we define~$\two[p]^{(q)}$ as $\set{0,1}$ if~$q\leq p$, $\set{0}$ otherwise.

\subsection{Norm-coverings, $\go$-lifters, $\xF(X)$, $\pi_x^X$}
\label{Su:NormCov}

Following \cite[Definition~2.1.2]{Larder}, we say that a poset~$P$ is
\begin{itemize}
\item[---] a \emph{\pjs} if the set~$U$ of all upper bounds of any finite subset~$X$ of~$P$ is a finitely generated upper subset of~$P$; then we denote by $\Sor X$ the (finite) set of all minimal elements of~$U$;

\item[---] \emph{supported} if it is a \pjs\ and every finite subset of~$P$ is contained in a finite subset~$Y$ of~$P$ which is \emph{$\Sor$-closed}, that is, $\Sor{Z}\subseteq Y$ whenever~$Z$ is a finite subset of~$Y$;

\item[---] an \emph{\ajs} if it is a \pjs\ in which every principal ideal~$P\dnw a$ is a \js.

\end{itemize}
We observed in \cite[\S~2.1]{Larder} the non-reversible implications
 \[
 \text{\js}\Rightarrow\text{\ajs}\Rightarrow\text{supported}
 \Rightarrow\text{\pjs}\,.
 \]
Following \cite[\S~2.6]{Larder}, a \emph{norm-covering} of a poset~$P$ is a pair $(X,\partial)$ where~$X$ is a \pjs\ and $\partial\colon X\to P$ is an isotone map.
For such a norm-covering, we denote by~$\rF(X)$ the Boolean algebra defined by generators~$\tilde{u}$, where $u\in X$, and relations
 \begin{align*}
 \tilde{v}&\leq\tilde{u}\,,&&\text{whenever }u\leq v\text{ in }X\,;\\
 \tilde{u}\wedge\tilde{v}&=
 \bigvee\vecm{\tilde{w}}{x\in\Sor\set{u,v}}\,,
 &&\text{whenever }u,v\in X\,;\\
 1&=\bigvee\vecm{\tilde{w}}{w\in\Sor\es}\,. 
 \end{align*}
Furthermore, for every $p\in P$, we denote by~$\rF(X)^{(p)}$ the ideal of~$\rF(X)$ generated by $\setm{\tilde{u}}{u\in X\,,\ p\leq\partial u}$.
The structure $\xF(X)\eqdef\pI{\rF(X),\vecm{\rF(X)^{(p)}}{p\in P}}$ is a $P$-scaled Boolean algebra \cite[Lemma~2.6.5]{Larder}.

In~\cite{Larder} we also introduce, for every $x\in X$, the unique morphism $\pi_x^X\colon\xF(X)\to\two[\partial x]$ that sends every $\tilde{u}$, where $u\in X$, to~$1$ if $u\leq x$ and~$0$ otherwise.
This morphism is normal  (cf. \cite[Lemma~2.6.7]{Larder}).

We will also need here a specialization of the concept of \emph{$\gl$-lifter} (obtained by setting $\gl=\aleph_0$ and~$\bX=$ set of all principal ideals of~$X$) introduced in \cite[\S~3.2]{Larder}.

\begin{definition}\label{D:goLift}
A \emph{principal $\go$-lifter} of a poset~$P$ is a norm-covering $(X,\partial)$ of~$P$ such that
\begin{enumerater}
\item
the set $X^=\eqdef
\setm{x\in X}{\partial x\text{ is not maximal in }P}$ is lower finite;

\item
$X$ is supported;

\item
For every map $S\colon X^=\to[X]^{<\go}$, there exists an isotone section~$\gs$ of~$\partial$ such that $S(\gs(p))\cap\gs(q)\subseteq\gs(p)$ for all $p<q$ in~$P$.

\end{enumerater}
\end{definition}

\subsection{The construction $\bA\otimes\vec{S}$}
\label{Su:AotimesS}

Let a category~$\cS$ have all nonempty finite products and all small directed colimits.
Let $\vec{S}=\vecm{S_p,\gs_p^q}{p\leq q\text{ in }P}$ be a $P$-indexed direct system in~$\cS$.
The functor ${}_{-}\otimes\vec{S}\colon\Bool_P\to\cS$ is first defined on all finitely presented members of~$\Bool_P$\,, as follows.
If~$\bA$ is finitely presented, we set
 \[
 \bA\otimes\vec{S}\eqdef\prod\vecm{S_{|a|_{\bA}}}{a\in\At A}\,.
 \]
In particular, $\two[p]\otimes\vec{S}=S_p$\,.
For a morphism $\gf\colon\bA\to\bB$ between finitely presented $P$-scaled Boolean algebras and an atom~$b$ of~$B$, we define~$b^\gf$ as the unique atom of~$A$ such that $b\leq\gf(b^\gf)$.
Then the product morphism $\gf\otimes\vec{S}\eqdef\prod\Vecm{\gs_{|b^\gf|_{\bA}}^{|b|_{\bB}}}{b\in\At B}$ goes from~$\bA\otimes\vec{S}$ to $\bB\otimes\vec{S}$.
This defines a functor from the finitely presented members of~$\Bool_P$ to~$\cS$.
Since every member of~$\Bool_P$ is a small directed colimit of a direct system of finitely presented objects, it follows, using \cite[Proposition~1.4.2]{Larder}, that this functor can be uniquely extended, up to isomorphism, to a functor from~$\Bool_P$ to~$\cS$ that preserves all small directed colimits.
This functor will also be denoted by ${}_{-}\otimes\vec{S}$.
For a $P$-scaled Boolean algebra~$\bA$, the object~$\bA\otimes\vec{S}$ will be called a \emph{condensate} of~$\vec{S}$.

In the particular case where~$\gf$ is a normal morphism, $\gf\otimes\vec{S}$ is a directed colimit of projection morphisms (i.e., canonical morphisms $X\times Y\to X$).
Now in all the cases we will be interested in, $\cS$ will be a category of models of first-order languages, so projection morphisms are surjective, thus so are their directed colimits.
Hence, in all those cases, if~$\gf$ is a \emph{normal} morphism, then $\gf\otimes\vec{S}$ is \emph{surjective}.

\section{A non-Cevian lattice with countably based differences}
\label{S:aleph2}

Throughout this section, we shall consider $P$-scaled Boolean algebras with
 \[
 P=\Pow[3]=\set{\es,1,2,3,12,13,23,123}
 \]
(cf.~\eqref{Eq:Pow3}).
Since~$P$ has exactly three \jirr\ elements (viz.~$1$, $2$, $3$), it follows from Gillibert and Wehrung \cite[Proposition~4.2]{GilWeh2011} that the relation denoted there by $(\aleph_2,{<}\aleph_0)\leadsto P$ holds.
This means that for every mapping $F\colon\Pow(\go_2)\to[\go_2]^{<\go}$, there exists a one-to-one map $f\colon P\hookrightarrow\go_2$ such that $F(f[P\dnw x))\cap f[P\dnw y]\subseteq f[P\dnw x]$ whenever $x\leq y$ in~$P$.

Now define~$X$ as the poset denoted by~$P\seq{\go_2}$ in the proof of \cite[Lemma~3.5.5]{Larder}, together with the canonical isotone map $\partial\colon X\to P$.
This means that
 \[
 X=\setm{(a,u)}{a\in P\,,\ u\colon U\to\go_2\text{ with }a=\bigvee U}
 \]
with componentwise ordering ($\leq$ on the first component, extension ordering on the second one), and $\partial(a,u)=a$ whenever $(a,u)\in X$.
It follows from (the proof of) \cite[Lemma~3.5.5]{Larder} that $X$ is lower finite and that furthermore, it is, together with the map~$\partial$, a \emph{principal $\go$-lifter} of~$P$ (cf. Definition~\ref{D:goLift}).

We apply the Armature Lemma to the following data:

\begin{itemize}
\item
$\cS$ is the category of all distributive lattices with zero with $0$-lattice homomorphisms;

\item
$\cA$ is the subcategory of~$\cS$ whose objects are the completely normal members of~$\cS$ with countably based differences, and whose morphisms are the closed $0$-lattice homomorphisms;

\item
$\Phi$ is the inclusion functor from~$\cA$ into~$\cS$.

\item
$\vec{S}\eqdef\vec{\bA}=\Idc\vec{A}$, where~$\vec{A}$ is the diagram introduced in Section~\ref{S:vecA}.
\end{itemize}

\begin{lemma}\label{L:cAhasdircolim}
$\cA$ is a subcategory of~$\cS$, closed under all small directed colimits and finite products.
\end{lemma}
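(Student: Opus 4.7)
The approach is to verify the three claims in turn. First, to see that $\cA$ is a subcategory of~$\cS$, identity maps are trivially closed, and closedness composes: given closed $f\colon A\to B$ and $g\colon B\to C$ in $\cA$, and $c\in(g\circ f)(a)\ominus_C(g\circ f)(a')$, the closedness of~$g$ yields $b\in f(a)\ominus_Bf(a')$ with $g(b)\leq c$, then the closedness of~$f$ yields $x\in a\ominus_Aa'$ with $f(x)\leq b$; hence $(g\circ f)(x)\leq c$.

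For directed colimits, I would take a directed system $\vecm{A_i,f_{ij}}{i\leq j}$ in~$\cA$ with colimit $\pI{A,\vecm{f_i}{i\in I}}$ in~$\cS$. Complete normality of~$A$ follows because it is first-order $\forall\exists$ and directed colimits of distributive lattices are computed as the usual set-theoretical colimit. The crucial step is showing that each cocone map~$f_i$ is closed: if $c\in f_i(a_i)\ominus_Af_i(a'_i)$, then the inequality $f_i(a_i)\leq f_i(a'_i)\vee c$ in~$A$ is already witnessed at some stage $j\geq i$ with $c=f_j(c_j)$, and applying the closedness of $f_{ij}$ produces $x_i\in a_i\ominus_{A_i}a'_i$ with $f_{ij}(x_i)\leq c_j$, hence $f_i(x_i)\leq c$. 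Once the $f_i$ are known to be closed, Lemma~\ref{L:Closed2CBD} upgrades countably based differences to~$A$: every pair $a,a'\in A$ factors as $f_i(a_i),f_i(a'_i)$ for some~$i$, and the $f_i$-image of a countable coinitial subset of $a_i\ominus_{A_i}a'_i$ is countable and coinitial in $a\ominus_Aa'$. The universal property within~$\cA$ further requires that the mediating morphism $g\colon A\to B$ induced by any cocone $\vecm{g_i}{i\in I}$ in~$\cA$ be closed, and this is proved by exactly the same factoring argument applied to $g_i$ in place of $f_{ij}$.

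For finite products of $A_1,A_2\in\cA$, the product in~$\cS$ is computed componentwise, complete normality is componentwise, and the identity $(x_1,x_2)\in(a_1,a_2)\ominus(a'_1,a'_2)$ iff $x_i\in a_i\ominus_{A_i}a'_i$ for each $i\in\set{1,2}$ shows that the cartesian product of two countable coinitial sets is a countable coinitial set for the product difference. Projections are closed by trivial padding of a witness in one coordinate with the corresponding element in the other. The only step requiring a small trick is closedness of the pairing $\langle g_1,g_2\rangle\colon B\to A_1\times A_2$ induced by closed morphisms $g_i\colon B\to A_i$: from closedness of each $g_i$ I obtain $y_i\in b\ominus_Bb'$ with $g_i(y_i)\leq x_i$, and then $y\eqdef y_1\wedge y_2$ satisfies $b\leq b'\vee y$ by the distributive law applied to the two inequalities $b\leq b'\vee y_i$, while $g_i(y)\leq g_i(y_i)\leq x_i$ for both~$i$. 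No deep obstacle arises, but the role of closedness is essential: since ``having countably based differences'' is an $\scL_{\go_1\go_1}$ rather than a first-order property, no general preservation theorem applies, and Lemma~\ref{L:Closed2CBD} is what makes the whole verification go through.
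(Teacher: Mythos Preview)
Your argument is correct and follows essentially the same route as the paper: both prove closedness of the colimiting cocone maps~$f_i$ by lifting the inequality $f_i(a_i)\leq f_i(a'_i)\vee c$ to some stage $j\geq i$ and invoking closedness of the transition map~$f_{ij}$, then apply Lemma~\ref{L:Closed2CBD} to transfer countably based differences to the colimit. You are more thorough than the paper in that you explicitly verify that closed maps compose, that the mediating morphism out of the colimit is closed, and that the pairing map into a product is closed (via the meet $y_1\wedge y_2$ and distributivity); the paper dismisses finite products as ``straightforward'' and does not spell out the universal property in~$\cA$, although the latter is what is actually needed for the condensate $\xF(X)\otimes\vec{\bA}$ to denote the same object in~$\cA$ as in~$\cS$.
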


\begin{proof}
The statement about finite products is straightforward.
Now let
 \[
 \vecm{D,\gd_i}{i\in I}=\varinjlim\vec{D}
 \]
within the category of all distributive lattices with zero and $0$-lattice homomorphisms,
where $\vec{D}=\vecm{D_i\,,\gd_i^j}{i\leq j\text{ in }I}$ is a direct system in~$\cA$.
Hence,
 \begin{gather}
 D=\bigcup\vecm{\gd_i[D_i]}{i\in I}\ \text{(directed union)}\,;
 \label{Eq:Aasunion}\\
 \text{For all }i\in I\text{ and all }x,y\in D_i\,,
 \ \gd_i(x)=\gd_i(y)\Rightarrow(\exists j\geq i)
 \pI{\gd_i^j(x)=\gd_i^j(y)}\,.\label{Eq:Aiasunion}
 \end{gather}
It is straightforward to verify, using~\eqref{Eq:Aasunion} and~\eqref{Eq:Aiasunion}, that~$D$ is a completely normal distributive lattice with zero (for this we do not need the assumption that the~$\gd_i^j$ are closed maps).

Let $i\in I$, let $x,x'\in D_i$ and $\ol{y}\in D$ such that $\gd_i(x)\leq\gd_i(x')\vee\ol{y}$.
By~\eqref{Eq:Aasunion}, there are $j\in I$ and $y\in D_j$ such that $\ol{y}=\gd_j(y)$; since~$I$ is directed, we may assume that $j\geq i$.
By~\eqref{Eq:Aiasunion}, there exists $k\geq j$ such that $\gd_i^k(x)\leq\gd_i^k(x')\vee\gd_j^k(y)$.
Since the map~$\gd_i^k$ is closed, there is $z\in D_i$ such that $x\leq x'\vee z$ and $\gd_i^k(z)\leq\gd_j^k(y)$.
Now the latter inequality implies that $\gd_i(z)\leq\gd_j(y)=\ol{y}$, thus completing the proof that the map~$\gd_i$ is closed.

Now let $\ol{x},\ol{y}\in D$.
We claim that $\ol{x}\ominus_D\ol{y}$ has a countable coinitial subset.
By~\eqref{Eq:Aasunion}, there are $i\in I$ and $x,y\in D_i$ such that $\ol{x}=\gd_i(x)$ and $\ol{y}=\gd_i(y)$.
Since~$D_i$ has countably based differences, $x\ominus_{D_i}y$ has a countable coinitial subset.
Since, by the paragraph above, $\gd_i$ is closed and by Lemma~\ref{L:Closed2CBD}, it follows that  $\ol{x}\ominus_D\ol{y}$ has a countable coinitial subset.
Therefore, $D$ has countably based differences.
\end{proof}

By Lemma~\ref{L:cAhasdircolim}, $\xF(X)\otimes\vec{\bA}$ (cf. Section~\ref{S:CrashCondens}) denotes the same object in~$\cA$ as in~$\cS$.
Denote it by~$\bB$.

\begin{theorem}\label{T:Main}
The structure~$\bB$ is a non-Cevian completely normal distributive lattice with zero and countably based differences.
It has cardinality~$\aleph_2$\,.
\end{theorem}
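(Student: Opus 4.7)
The plan is to establish the four properties in turn: complete normality, countably based differences, cardinality~$\aleph_2$, and non-Cevianness. The first three are structural consequences of the condensate construction, while the fourth is the novel content and uses the diagram-level Lemma~\ref{L:IdcANonRepr}.

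For the structural part: each $\bA_p=\Idc A_p$ is a countable completely normal distributive lattice with zero (and so trivially has countably based differences), hence belongs to~$\cA$. By Section~\ref{S:CrashCondens}, $\bB$ is a directed colimit in~$\cS$ of finite products of the~$\bA_p$'s, so by Lemma~\ref{L:cAhasdircolim} we get $\bB\in\cA$, handling both complete normality and countably based differences. For the cardinality, the principal $\go$-lifter~$(X,\partial)$ of~$P$ has exactly~$\aleph_2$ elements (by inspection of the construction in \cite[Lemma~3.5.5]{Larder}); this, together with countability of each~$\bA_p$, gives $|\bB|\leq\aleph_2$, while the lower bound comes from the $\aleph_2$-many canonical projections $\pi_x^X\otimes\vec{\bA}\colon\bB\to\bA_{\partial x}$ separating enough elements.

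Now suppose, toward a contradiction, that~$\sd$ is a Cevian operation on~$\bB$. Fix some $x_0\in X$ with $\partial x_0=123$, giving a canonical morphism $\iota\colon\bA_{123}\to\bB$ (namely, the result of applying $-\otimes\vec{\bA}$ to the morphism $\two[123]\to\xF(X)$ of $P$-scaled Boolean algebras sending~$1$ to the generator~$\tilde{x_0}$). Set $\tilde{\ba}_i\eqdef\iota(\ba_i)$ for $i\in[3]$ and $\bc_{ij}\eqdef\tilde{\ba}_i\sd\tilde{\ba}_j$ for $i\neq j$ in~$[3]$. The Cevian identities guarantee that the family $(\bc_{ij})$ satisfies, in~$\bB$, the analogues of conditions~\eqref{cijsplits1}--\eqref{cijCeva} of Lemma~\ref{L:IdcANonRepr}; the remaining localization condition~\eqref{cijinrng} is what must be produced by transferring these witnesses back into the diagram~$\vec{\bA}$.

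This is where the Armature Lemma (\cite[Lemma~3.2.2]{Larder}) enters. Each~$\bc_{ij}$, as an element of the condensate~$\bB$, depends on only finitely many generators of~$\xF(X)$; encoding these dependencies as a map $S\colon X^=\to[X]^{<\go}$, the principal $\go$-lifter property of~$(X,\partial)$ produces an isotone section $\gs\colon P\to X$ of~$\partial$ that is coherent with this support structure. Composing with the projections $\bB\to\bA_{\partial\gs(p)}$ yields elements $\bar{\bc}_{ij}\in\bA_{123}$ which, by virtue of the compatibility imposed by~$\gs$, additionally lie in the ranges of the arrows~$\bga_{ij}^{123}$, thereby satisfying condition~\eqref{cijinrng} of Lemma~\ref{L:IdcANonRepr} while still satisfying~\eqref{cijsplits1}--\eqref{cijCeva}. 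The resulting family contradicts Lemma~\ref{L:IdcANonRepr}, completing the proof. The main obstacle is precisely this last step: choosing the support map~$S$ and performing the diagram chase so that~$\gs$ forces the localization condition~\eqref{cijinrng} at the pullback level—this combinatorial bookkeeping, relating finite presentation in~$\xF(X)$ to the range structure of the arrows in~$\vec{\bA}$, is the technical heart of the argument.
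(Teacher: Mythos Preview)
Your structural arguments for complete normality, countably based differences, and cardinality are essentially those of the paper (the paper's cardinality lower bound is a bit more explicit, exhibiting $\aleph_2$ pairwise distinct elements, but your sketch is in the right spirit).

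The non-Cevian argument, however, has a genuine gap. You fix a single $x_0\in X$ with $\partial x_0=123$, push the~$\ba_i$ into~$\bB$ via~$\iota$, compute the Cevian differences there, and then hope to invoke the Armature Lemma to project back. But the Armature Lemma does not accept this kind of input: it requires an entire $X$-indexed family of finitely presented subobjects $\bB_x\subseteq\bB$ with inclusion maps, and it outputs a section $\gs\colon P\to X$ such that the restrictions $\rho_{\gs(p)}\res_{\bB_{\gs(p)}}$ form a natural transformation. There is no mechanism for prescribing $\gs(123)=x_0$; the section is chosen by the combinatorics of the lifter, not by you. Consequently there is no reason for $\rho_{\gs(123)}(\tilde{\ba}_i)$ to equal~$\ba_i$, and the projected relations need not say anything about~$\ba_1,\ba_2,\ba_3$. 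Your description of ``encoding these dependencies as a map $S\colon X^=\to[X]^{<\go}$'' does not address this, since~$S$ is a function on~$X^=$ (a set of size~$\aleph_2$), not an encoding of six fixed elements of~$\bB$.

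The paper's route avoids this entirely by reversing the order of choices. It first picks, for \emph{every} $x\in X$ with $\partial x\in\{1,2,3\}$, an element $\bb_x\in\bB$ with $\rho_x(\bb_x)=1$. It then defines, for each $x\in X$, the finite sublattice~$\bB_x$ generated by the relevant~$\bb_u$ and $\bb_u\sd\bb_v$ with $u,v\leq x$. Only now does the Armature Lemma run, producing~$\gs$; one then sets $\bc_{ij}\eqdef\rho_{\gs(123)}(\bb_{\gs(i)}\sd\bb_{\gs(j)})$. The point is that $\bb_{\gs(i)}\sd\bb_{\gs(j)}\in\bB_{\gs(ij)}$ by construction, so the natural-transformation property forces $\bc_{ij}$ into the range of~$\bga_{ij}^{123}$, and the identity $\rho_{\gs(i)}(\bb_{\gs(i)})=1$ together with naturality gives $\rho_{\gs(123)}(\bb_{\gs(i)})=\ba_i$. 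The ``localization'' condition~\eqref{cijinrng} thus falls out of the natural-transformation square, not from a support-tracking argument on a pre-chosen~$x_0$.
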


\begin{proof}
Since~$\bB$ is an object of~$\cA$, it is a completely normal distributive lattice with zero and countably based differences.
Since~$X$ has cardinality~$\aleph_2$\,, so does~$\xF(X)$, and so~$\xF(X)$ is the directed colimit of a diagram, indexed by a set of cardinality~$\aleph_2$\,, of finitely presented $P$-scaled Boolean algebras; since all~$\bA_p$ are countable, it follows that $\bB=\xF(X)\otimes\vec{\bA}$ has cardinality at most~$\aleph_2$\,.

We claim that~$\bB$ has cardinality exactly~$\aleph_2$\,.
Indeed, for every $\xi<\go_2$\,, denote by~$u_{\xi}$ the constant function on the singleton~$\set{123}$ with value~$\xi$.
The pair $v_{\xi}\eqdef(123,u_{\xi})$ belongs to~$X$ with $p\leq 123=\partial v_{\xi}$ whenever $p\in P$; thus $\tilde{v}_{\xi}\in\rF(X)^{(p)}$.
Hence, the Boolean subalgebra $V_{\xi}\eqdef\set{0,\tilde{v}_{\xi},\neg\tilde{v}_{\xi},1}$ of~$\rF(X)$, endowed with the ideals
 \[
 V_{\xi}^{(p)}\eqdef\begin{cases}
 \set{0,\tilde{v}_{\xi}}\,,&\text{if }p\neq\es\,,\\
 V_{\xi}\,,&\text{if }p=\es\,,
 \end{cases}
 \qquad\text{for }p\in P\,,
 \]
defines a finitely presented $P$-scaled Boolean algebra~$\bV_{\xi}$\,, and the inclusion map from~$\bV_{\xi}$ into~$\rF(X)$ defines a morphism $\bV_{\xi}\to\xF(X)$ in~$\Bool_P$\,, which in turns yields a morphism $e_{\xi}\colon\bV_{\xi}\otimes\vec{\bA}\to\xF(X)\otimes\vec{\bA}$ in~$\cA$.
Now pick any $\bu\in\bA_{123}\setminus\set{0}$.
Using the canonical isomorphisms $\bV_{\xi}\otimes\vec{\bA}\cong\bA_{123}\times\bA_{\es}\cong\bA_{123}$\,, it can be verified that the elements $e_{\xi}(\bu)$, for $\xi<\go_2$\,, are pairwise distinct.
(Think of~$e_{\xi}(\bu)$ as the constant map with value~$\bu$ on the clopen subset of the Stone space of~$\xF(X)$ associated to~$v_{\xi}$\,.)

Finally, towards a contradiction, we shall suppose that~$\bB$ has a Cevian operation~$\sd$.
Set
 \[
 X_{(k)}\eqdef\setm{x\in X}
 {\partial x\text{ has height }k\text{ within }P}\,,
 \]
for every nonnegative integer~$k$.
In particular, $X_{(k)}$ is nonempty if{f} $k\in\set{0,1,2,3}$, so~$X$ is the disjoint union of~$X_{(0)}$\,, $X_{(1)}$\,, $X_{(2)}$\,, $X_{(3)}$.
Further, $X_{(1)}=\partial^{-1}\set{1,2,3}$.

For each $x\in X$, the map $\rho_x\eqdef\pi_x^X\otimes\vec{\bA}$ is a surjective lattice homomorphism from~$\bB$ onto $\bA_{\partial x}$ (cf. 2.6.7, 3.1.2, and~3.1.3 in~\cite{Larder}).
In particular, if $x\in X_{(1)}$\,, then $\partial x\in\set{1,2,3}$, thus $\bA_{\partial x}\cong\set{0,1}$, and we may pick $\bb_x\in\bB$ such that $\rho_x(\bb_x)=1$.
For those~$x$, $\bB_x\eqdef\set{0,\bb_x}$ is a $0$-sublattice of~$\bB$.

Now for each $x\in X_{(2)}\cup X_{(3)}$\,, it follows from the lower finiteness of~$X$ that the $0$-sublattice~$\bB_x$ of~$\bB$ generated by all elements of~$\bB$ of the form either~$\bb_u$ or~$\bb_u\sd\bb_v$\,, where $u,v\in X_{(1)}\dnw x$, is finite%
\footnote{In the original statement of the Armature Lemma, we need~$\bB_{\bx}$ to be defined whenever~$\bx$ is a certain kind of \emph{ideal} of~$X$.
However, since~$(X,\partial)$ is a principal lifter of~$P$, it suffices here to consider the case where~$\bx$ is a principal ideal, which is then identified to its largest element.}
.
For any $x\in X$, $\bB_x$ is thus a finite $0$-sublattice of~$\bB$.
Denote by $\gf_x\colon\bB_x\hookrightarrow\bB$ the inclusion map, and by $\gf_x^y\colon\bB_x\hookrightarrow\bB_y$ the inclusion map in case $x\leq y$.
Hence $\vec{\bB}\eqdef\vecm{\bB_x,\gf_x^y}{x\leq y\text{ in }X}$ is an $X$-indexed commutative diagram in~$\cS$.

Since all~$\bB_x$ are finite, they are finitely presented within~$\cS$, thus we can apply the Armature Lemma~\cite[Lemma~3.2.2]{Larder} to those data, with the~$\bB_x$ in place of the required~$S_x$ and the identity of~$\bB$ in place of~$\chi$.
We get an isotone section $\gs\colon P\hookrightarrow X$ of~$\partial$ such that the family
 \[
 \vec{\chi}=
 \vecm{\chi_p}{p\in P}
 \eqdef\vecm{\rho_{\gs(p)}\res_{\bB_{\gs(p)}}}{p\in P}
 \]
is a natural transformation from $\vec{\bB}\gs\eqdef\vecm{\bB_{\gs(p)},\gf_{\gs(p)}^{\gs(q)}}{p\leq q\text{ in }P}$ to~$\vec{\bA}$.
This means that for all $p\leq q$ in~$P$, the square represented in Figure~\ref{Fig:vecchi} is commutative.

\begin{figure}[htb]
\begin{tikzcd}
\centering
\bB_{\gs(q)}\ar[r,"\chi_q"] & \bA_q\\
\bB_{\gs(p)}\ar[u,"\gf_{\gs(p)}^{\gs(q)}"]\ar[r,"\chi_p"] &
\bA_p\ar[u,"\bga_p^q" ']
\end{tikzcd}
\caption{The natural transformation~$\vec{\chi}$}\label{Fig:vecchi}
\end{figure}

For each $p\in\set{1,2,3}$, $\gs(p)\in X_{(1)}$ thus $\bB_{\gs(p)}=\set{0,\bb_{\gs(p)}}$ and
 \[
 \chi_p(\bb_{\gs(p)})=\rho_{\gs(p)}(\bb_{\gs(p)})=1\,,
 \]
so, using the commutativity of the diagram of Figure~\ref{Fig:vecchi} with $q\eqdef 123$,
 \begin{equation}\label{Eq:chibgsp2ap}
 \chi_{123}(\bb_{\gs(p)})=
 \pI{\chi_{123}\circ\gf_{\gs(p)}^{\gs(123)}}(\bb_{\gs(p)})=
 (\bga_{p}^{123}\circ\chi_p)(\bb_{\gs(p)})=\bga_{p}^{123}(1)
 =\ba_p
 \end{equation}
(we defined the~$\ba_i$ in~\eqref{Eq:ba1ba2ba3}).

Now we set
 \[
 \bc_{ij}\eqdef\chi_{123}(\bb_{\gs(i)}\sd\bb_{\gs(j)})\,,\quad
 \text{for all distinct }i,j\in[3]\,.
 \]
Since~$\gs(i)$ and~$\gs(j)$ both lie below~$\gs(ij)$, $\bb_{\gs(i)}\sd\bb_{\gs(j)}$ belongs to~$\bB_{\gs(ij)}$\,, thus
 \[
 \bc_{ij}=
 \pI{\chi_{123}\circ\gf_{\gs(ij)}^{\gs(123)}}(\bb_{\gs(i)}\sd\bb_{\gs(j)})
 =\pI{\bga_{ij}^{123}\circ\chi_{ij}}(\bb_{\gs(i)}\sd\bb_{\gs(j)})
 \]
belongs to the range of~$\bga_{ij}^{123}$.

Since~$\sd$ is a Cevian operation, the inequality $\bb_{\gs(i)}\leq\bb_{\gs(j)}\vee(\bb_{\gs(i)}\sd\bb_{\gs(j)})$ holds, thus, applying the homomorphism~$\chi_{123}$ and using~\eqref{Eq:chibgsp2ap}, we get
 \[
 \ba_i\leq\ba_j\vee\bc_{ij}\,.
 \]
Similarly, the equation
 \[
 (\bb_{\gs(i)}\sd\bb_{\gs(j)})\wedge(\bb_{\gs(j)}\sd\bb_{\gs(i)})=0
 \]
holds, thus, applying~$\chi_{123}$\,, we get
 \[
 \bc_{ij}\wedge\bc_{ji}=0\,.
 \]
Finally, the inequalities
\[
 (\bb_{\gs(1)}\sd\bb_{\gs(2)})\wedge(\bb_{\gs(2)}\sd\bb_{\gs(3)})
 \leq\bb_{\gs(1)}\sd\bb_{\gs(3)}\leq
 (\bb_{\gs(1)}\sd\bb_{\gs(2)})\vee(\bb_{\gs(2)}\sd\bb_{\gs(3)})
 \]
hold (use Lemma~\ref{L:Cev4}), thus, applying~$\chi_{123}$\,, we get
 \[
 \bc_{12}\wedge\bc_{23}\leq\bc_{13}\leq\bc_{12}\vee\bc_{23}\,.
 \]
We have thus proved that the elements~$\bc_{ij}$\,, for $i\neq j$ in~$[3]$, satisfy Conditions~\eqref{cijinrng}--\eqref{cijCeva} in the statement of Lemma~\ref{L:IdcANonRepr}; a contradiction.
\end{proof}

We obtain the following object (as opposed to diagram) version of Corollary~\ref{C:NotCongIdcvecG}.

\begin{corollary}\label{C:Main}
There exists a non-Cevian bounded completely normal distributive lattice with countably based differences, of cardinality~$\aleph_2$\,.
Hence, $\bB'$ is not a homomorphic image of~$\Csc{G}$ for any \lgrp~$G$ or of~$\Idc{G}$ for any representable \lgrp~$G$.
\end{corollary}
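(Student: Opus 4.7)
The plan is to obtain $\bB'$ by formally adjoining a new top element to the lattice $\bB$ furnished by Theorem~\ref{T:Main}. Set $\bB'\eqdef\bB\cup\set{\top}$, with~$\top$ a fresh symbol declared strictly above every element of~$\bB$. Then~$\bB'$ is a bounded distributive lattice of the same cardinality~$\aleph_2$ as~$\bB$, and~$\bB$ is an ideal of~$\bB'$ (it is a nonempty, upward directed lower subset).

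Next I would verify that $\bB'$ inherits the relevant properties from~$\bB$. Complete normality is clear: for $a,b\in\bB$ the witnesses provided by~$\bB$ also work in~$\bB'$, while any case involving~$\top$ is handled trivially (if $a=\top$ and $b\in\bB$, take $u\eqdef\top$ and $v\eqdef0$, and symmetrically). Countably based differences is equally routine: for $a,b\in\bB$, a countable coinitial subset of $a\ominus_{\bB}b$ remains coinitial in $a\ominus_{\bB'}b=(a\ominus_{\bB}b)\cup\set{\top}$, and the other cases give differences that either reduce to~$\set{\top}$ or contain~$0$, hence are trivially countably based. The crucial point is that~$\bB'$ is not Cevian; this follows at once from Lemma~\ref{L:ProdHom}(3), since any ideal of a Cevian lattice is Cevian, so a Cevian operation on~$\bB'$ would restrict (via the normalization $x\sd'y\eqdef x\wedge(x\sd y)$ described in the proof of that lemma) to a Cevian operation on~$\bB$, contradicting Theorem~\ref{T:Main}.

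For the ``hence'' clause, I would combine the results of Section~\ref{S:CevaDiff}. By Proposition~\ref{P:CevaSclgrp} the lattice~$\Csc{G}$ is Cevian for every \lgrp~$G$, and by Proposition~\ref{P:CevaIdRepr} so is~$\Idc{G}$ whenever~$G$ is a representable \lgrp. By Lemma~\ref{L:ProdHom}(2) every homomorphic image of a Cevian lattice is Cevian. Since~$\bB'$ is \emph{not} Cevian, it cannot be a homomorphic image of any~$\Csc{G}$, nor of~$\Idc{G}$ for any representable~$G$. The only real obstacle lies in the verification that non-Cevianness is transferred from~$\bB$ to~$\bB'$, and this is immediate once one invokes the ``ideal'' clause of Lemma~\ref{L:ProdHom}; the rest is bookkeeping.
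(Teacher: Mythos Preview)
Your proposal is correct and follows essentially the same route as the paper: adjoin a top element to~$\bB$, invoke Lemma~\ref{L:ProdHom}(3) to transfer non-Cevianness to~$\bB'$ via the ideal~$\bB$, and then combine Lemma~\ref{L:ProdHom}(2) with Propositions~\ref{P:CevaSclgrp} and~\ref{P:CevaIdRepr} for the final clause. Your explicit verifications that complete normality and countably based differences pass to~$\bB'$ are more detailed than the paper's terse treatment, but the underlying argument is identical.
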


\begin{proof}
Let~$\bB'$ be obtained by adding a new top element to~$\bB$.
Since~$\bB$ is an ideal of~$\bB'$ and~$\bB$ is not Cevian, neither is~$\bB'$ (cf. Lemma~\ref{L:ProdHom}).

The last part of the statement of Corollary~\ref{C:Main} follows immediately, using Lem\-ma~\ref{L:ProdHom}, from Propositions~\ref{P:CevaSclgrp} and~\ref{P:CevaIdRepr}.
\end{proof}

\begin{remark}\label{Rk:Gamp}
A blunt application of \cite[Lemma~3.4.2]{Larder} (called there~CLL) to the non-lifting result obtained in Corollary~\ref{C:NotCongIdcvecG} would have yielded, in the statement of Corollary~\ref{C:Main}, a counterexample of cardinality~$\aleph_3$ (as opposed to~$\aleph_2$).
In order to get around that difficulty, we are applying here the Armature Lemma to the result of Lemma~\ref{L:IdcANonRepr}, which can be viewed as a ``local'' version of Corollary~\ref{C:NotCongIdcvecG}.
This technique was first put to full use in Gillibert~\cite{Gamps}.
It was instrumental in the proof, in Gillibert deep paper~\cite{PCPL}, that \emph{the congruence class of any finitely generated lattice variety~$\cV$ determines the pair consisting of~$\cV$ and its dual variety}.
\end{remark}

\begin{remark}\label{Rk:asuivre}
In the sequel~\cite{NonElt} to the present paper, we investigate an apparently innocuous extension of the construction $\bA\otimes\vec{S}$, denoted there by~$\bA\otimes^{\gl}_{\Phi}\vec{S}$.
This construction, applied to $\vec{S}\eqdef\vec{A}$ (the diagram introduced in Section~\ref{S:vecA}) yields for example that \emph{for any infinite cardinal~$\gl$, the class of principal ideal lattices of all Abelian \lgrp{s} with order-unit is not closed under $\scL_{\infty\gl}$-elementary equivalence}.
\end{remark}

\begin{problem}
Let~$D$ be a Cevian lattice with zero and with countably based differences.
Is there an Abelian \lgrp~$G$ such that $D\cong\Idc{G}$?
\end{problem}

The counterexample given in Remark~\ref{Rk:CevaIdRepr} shows that ``Cevian'' alone is not sufficient to get representability as~$\Idc{G}$, while Corollary~\ref{C:Main} shows that ``countably based differences'' alone is also not sufficient.
On the other hand, both ``Cevian'' and ``countably based differences'' are preserved under retracts, while it is not known whether any retract of a lattice of the form~$\Idc{G}$, for~$G$ an Abelian \lgrp, has this form (cf. \cite[Problem~2]{MV1}).
This would rather suggest a negative answer to the Problem above.


\providecommand{\noopsort}[1]{}\def\cprime{$'$}
  \def\polhk#1{\setbox0=\hbox{#1}{\ooalign{\hidewidth
  \lower1.5ex\hbox{`}\hidewidth\crcr\unhbox0}}}
  \providecommand{\bysame}{\leavevmode\hbox to3em{\hrulefill}\thinspace}
\providecommand{\MR}{\relax\ifhmode\unskip\space\fi MR }
\providecommand{\MRhref}[2]{%
  \href{http://www.ams.org/mathscinet-getitem?mr=#1}{#2}
}
\providecommand{\href}[2]{#2}

\end{document}